\definecolor{refkey}{rgb}{0,0,1}
\definecolor{labelkey}{rgb}{0,0,1}
\numberwithin{equation}{section}
\newtheorem{theorem}{Theorem}[section]
\newtheorem{lemma}[theorem]{Lemma}
\newtheorem{corollary}[theorem]{Corollary}
\newtheorem{Definition}[theorem]{Definition}
\newtheorem*{theorem*}{Theorem}
\newenvironment{definition}{\begin{Definition}\rm}{\end{Definition}}
\newtheorem{Remark}[theorem]{Remark}
\newenvironment{remark}{\begin{Remark}\rm}{\end{Remark}}
\newtheorem{RHproblem}[theorem]{RH problem}
\newtheorem{Example}[theorem]{Example}
\newenvironment{example}{\begin{Example}\rm}{\end{Example}}
\newcommand{\C}{\mathbb{C}}
\newcommand{\D}{\mathbb D}
\newcommand{\R}{\mathbb{R}}
\newcommand{\CC}{\mathcal C}
\newcommand{\LL}{\mathcal L}
\newcommand{\OO}{\mathcal O}
\newcommand{\VV}{\mathcal V}
\newcommand{\eps}{\epsilon}
\newcommand{\p}{\partial}
\newcommand{\vphi}{\varphi}
\def \deg{\mbox{{\rm deg} }}
\def\det{\mathop{\mathrm{det}}\nolimits}
\def\supp{\mathop{\mathrm{supp}}\nolimits}
\renewcommand{\bar}{\overline}
\renewcommand{\tilde}{\widetilde}
\renewcommand{\hat}{\widehat}
\begin{document}
\title{Weighted holomorphic polynomial approximation}
\author{S. Charpentier, N. Levenberg\thanks{Supported by Simons Foundation grant No. 707450}
 \ and F. Wielonsky}


\maketitle

\begin{abstract}

For $G$ an open set in $\C$ and $W$ a non-vanishing holomorphic function in $G$, in the late 1990's, Pritsker and Varga \cite{PV1} characterized pairs $(G,W)$ having the property that any $f$ holomorphic in $G$ can be locally uniformly approximated in $G$ by weighted holomorphic polynomials $\{W(z)^np_n(z)\}, \ deg(p_n)\leq n$. We further develop their theory in first proving a quantitative Bernstein-Walsh type theorem for certain pairs $(G,W)$. Then we consider the special case where $W(z)=1/(1+z)$ and $G$ is a loop of the lemniscate 
$\{z\in \C: |z(z+1)|=1/4\}$. We show the normalized measures associated to the zeros of the $n-th$ order Taylor polynomial about $0$ of the function $(1+z)^{-n}$ converge to the weighted equilibrium measure of $\bar G$ with weight $|W|$ as $n\to \infty$. This mimics the motivational case of Pritsker and Varga  \cite{PV3} where $G$ is the inside of the Szeg\H o curve and $W(z)=e^{-z}$. Lastly, we initiate a study of weighted holomorphic polynomial approximation in $\C^n, \ n>1$.
 
\end{abstract}



\bigskip

\section{Introduction} There is a well-developed theory of approximation by weighted polynomials $w(z)^np_n(z)$ in one complex variable where $w:K\to \R^+$ is a 
nonnegative weight function on a compact set $K\subset \C$ (or a closed set $K$ with additional hypotheses on $w$). This theory is highly developed for certain subsets of the real line where a major issue is in finding the support $S_w$ of the weighted equilibrium measure and its relation to uniform approximation of continuous functions, c.f., \cite{ST} Chapter VI. Pritsker and Varga \cite{PV1} considered the case of a ``holomorphic weight'' $W(z)$ in the sense that they studied approximation by weighted holomorphic polynomials $W(z)^np_n(z)$. Precisely, let $G$ be an open set in $\C$ and $W$ be a non-vanishing holomorphic function in $G$. 

\begin{definition} The pair $(G,W)$ is said to have the {\it approximation property} if any $f$ holomorphic in $G$ can be locally uniformly approximated in $G$ by weighted holomorphic polynomials; i.e., for any $K$ compact in $G$, there exist a sequence $\{P_n\}$ of polynomials $P_n \in \C[z]$ with $\deg P_n \leq n$ (we write $P_n \in \mathcal P_n$) such that 
$$\|f-W^{n}P_{n}\|_{K}:=\sup_{z\in K}|f(z)-W(z)^nP_n(z)|\to0.$$
\end{definition}

As pointed out in \cite{PV1}, for $W\equiv 1$, $G$ should be a finite or countable disjoint union of simply connected domains, and thus they only considered such $G$. Let $Q:=-\log |W|$. We state the main result of \cite{PV1}.

\begin{theorem}[{\cite[Theorem 1.1]{PV1}}] \label{pvmain}  
The pair $(G,W)$ has the approximation property if and only if there exists a probability measure $\mu$ supported on $\partial G$ and a real constant $F$ such that 
$$
 U^{\mu}(z)+Q(z)=F,\quad z\in G.
$$

\end{theorem}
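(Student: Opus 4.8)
\medskip
\noindent\textbf{Proof plan.}

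\smallskip
\noindent\emph{Necessity.} The plan is to feed the constant function $f\equiv1$ into the approximation property: this yields $P_n\in\mathcal P_n$ with $W^nP_n\to1$ locally uniformly on $G$, hence $\tfrac1n\log|P_n|\to-\log|W|=Q$ locally uniformly on $G$, and, since $W$ is zero-free on $G$, Hurwitz's theorem applied to $W^nP_n\to1\not\equiv0$ shows that the zeros of $P_n$ eventually avoid every compact subset of $G$. Writing $P_n=c_n\prod_{j=1}^{k_n}(z-a_{j,n})$ with $k_n=\deg P_n\le n$ and $\sigma_n:=\tfrac1n\sum_j\delta_{a_{j,n}}$ (a positive measure of mass $\le1$), every weak-$*$ subsequential limit of $\{\sigma_n\}$ is then carried by $\C\setminus G$. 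Since $\tfrac1n\log|P_n|$ is locally bounded on $G$, so is $\tfrac1n\log|c_n|$; I would decompose $\sigma_n$ into its restriction to a fixed large disk and a ``tail'', whose potential tends to a constant uniformly on compacta of $G$, and extract along a subsequence a probability measure $\mu$ and a constant $c$ with $\tfrac1n\log|c_n|\to c$ and --- using that $t\mapsto\log\frac1{|z-t|}$ is continuous and bounded on the compact set $\supp\mu$ for each fixed $z\in G$ --- $\tfrac1n\log|P_n(z)|\to c-U^\mu(z)$ on $G$. Comparing this with the limit $Q$ gives $U^\mu+Q=c=:F$ on $G$. The one genuinely delicate point of this half is to improve $\supp\mu\subset\C\setminus G$ to $\supp\mu\subset\partial G$: mass escaping to the unbounded complementary component has already been absorbed into $c$, and one must show --- this is where the simple connectedness of the components of $G$ enters --- that no mass remains trapped strictly inside a bounded complementary component of $\bar G$.

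\smallskip
\noindent\emph{Sufficiency.} Assume $U^\mu+Q=F$ on $G$ with $\mu$ a probability measure on $\partial G$. First I would reduce ``approximate every $f$'' to ``approximate the constant $1$''. Fix $f$ holomorphic on $G$, a compact $K\subset G$, and $\eps>0$. Because the components of $G$ are simply connected, the ``filled'' set $\widehat K$ obtained by adjoining to $K$ its bounded complementary components is again a compact subset of $G$, now with connected complement, so Runge's theorem supplies a polynomial $r$, of degree $d$ say, with $\|f-r\|_K\le\|f-r\|_{\widehat K}<\eps$. Hence it is enough to construct polynomials $t_n\in\mathcal P_{n-d}$ with $W^nt_n\to1$ locally uniformly on $G$: setting $P_n:=t_n\,r\in\mathcal P_n$ we then get $W^nP_n=(W^nt_n)\,r\to r$ locally uniformly, so $\limsup_n\|f-W^nP_n\|_K\le\|f-r\|_K<\eps$, and letting $\eps\downarrow0$ finishes.

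\smallskip
\noindent\emph{The core construction and the main obstacle.} To build such $t_n$, I would take nodes $a_{1,n},\dots,a_{n-d,n}\in\partial G$ that are asymptotically $\mu$-distributed (weighted Leja or Fej\'er-type points), and set $\omega_n(z):=\prod_{k=1}^{n-d}(z-a_{k,n})$, $t_n:=e^{nF}\omega_n\in\mathcal P_{n-d}$. Since $\nu(\omega_n):=\tfrac1{n-d}\sum_k\delta_{a_{k,n}}\to\mu$ weak-$*$, $\supp\mu\subset\partial G$, and $K$ lies at positive distance from $\partial G$, the kernel $\log\frac1{|z-t|}$ is continuous and bounded on $K\times\partial G$; hence $\tfrac1n\log|\omega_n|\to-U^\mu$ uniformly on $K$, and therefore $\tfrac1n\log|W^nt_n|=F+\log|W|+\tfrac1n\log|\omega_n|\to F+\log|W|-U^\mu=F-Q-U^\mu=0$ uniformly on $K$, i.e.\ $|W^nt_n|^{1/n}\to1$. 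The hard part --- the step I expect to be the real obstacle --- is to upgrade this $n$-th-root statement to genuine convergence $W^nt_n\to1$, which the root asymptotics alone do not give. For that one needs (i) a uniform upper bound for $|W^n\omega_n|$ as $z$ tends to $\partial G$ from inside $G$, coming from the Frostman-type inequality $U^\mu+Q\ge F$ near $\partial G$ (immediate if $W$ extends continuously and zero-freely to $\bar G$, and reduced to that case by exhausting $G$), together with (ii) a maximum-modulus/normal-families argument forcing the limit, already of modulus $1$ throughout $G$, to equal the constant $1$ (its value being pinned down by the normalization of the node array). Equivalently, (i)--(ii) amount to a quantitative weighted Bernstein--Walsh estimate: the hypothesis $U^\mu+Q\equiv F$ on $G$ says precisely that the relevant weighted extremal function of $\bar G$ is constant on $G$, which is exactly what forces $\|W^nt_n-1\|_K\to0$ for an appropriate node array. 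Granting this, the reduction above delivers the approximation property.
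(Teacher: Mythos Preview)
The paper does not contain a proof of this theorem: it is quoted verbatim as \cite[Theorem 1.1]{PV1} and used as a black box throughout. There is therefore no ``paper's own proof'' to compare your attempt against.

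That said, two remarks in the paper bear on your plan and let one infer the shape of the original argument. First, near the end of the introduction the authors note that the proof of the ``only if'' direction in \cite{PV1} uses only that the constant function $1$ can be locally uniformly approximated by weighted polynomials $\{W^np_n\}$; this is exactly how you open your necessity argument, so your strategy there is the intended one. Second, in discussing their own Theorem~\ref{bwpv} the authors say the ``if'' direction follows the technique of \cite{PV1}, namely polynomial interpolation of $f/W^n$ at $n$-th order weighted Fekete points together with the Hermite remainder formula. That is a different route from your sufficiency plan (Runge reduction to $f\equiv1$, then Leja/Fej\'er nodes and a root-asymptotics-to-convergence upgrade): the Fekete--Hermite method produces an explicit integral remainder whose size one estimates directly via the known asymptotics of weighted Fekete polynomials, so it sidesteps the ``hard part'' you yourself flag.

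On the substance of your sketch: the gaps you acknowledge are real. In necessity, you assert that the weak-$*$ limit is a \emph{probability} measure supported on $\partial G$, but your counting measures have total mass $k_n/n\le1$ and you give no mechanism preventing mass loss (beyond absorbing escape to $\infty$ into the constant) or mass sitting in bounded complementary components; simple connectedness of the components of $G$ is invoked but not actually used. In sufficiency, the upgrade from $|W^nt_n|^{1/n}\to1$ to $W^nt_n\to1$ is the entire content of the construction and is not supplied; a generic $\mu$-equidistributed node array will not do this without further input, which is presumably why \cite{PV1} goes through Fekete points and an explicit error formula instead.
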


\noindent Here, for a measure $\nu$, 
$$U^{\nu}(z):=\int \log \frac{1}{|z-\zeta|}d\nu(\zeta)$$ 
is the logarithmic potential of $\nu$.
If $W$ is continuous on $\bar G$, the measure $\mu$ in Theorem \ref{pvmain} is the solution $\mu^{\bar G,Q}$ of the weighted equilibrium problem on $\bar G$ with weight $Q$ (see section 2). Thus the approximation property implies that $\supp\mu^{\bar G,Q}\subset\p G$.

Many interesting classes of examples are given in their paper. For example, if $W(z)=z$, for a disk $G=D(a,r)=\{z\in \C: |z-a|<r\}$ centered at a point $a$ on the positive real axis, the pair $(G,W)$ has the approximation property if and only if $r<a/3$. 

In this paper, we further develop the theory of weighted holomorphic polynomial approximation. Our first main goal, in the next section, is to give a type of quantitative version of Theorem \ref{pvmain} in the spirit of Bernstein-Walsh (Theorem \ref{bwpv}). For $W$ entire, $K\subset \C$ compact, $R<0$, and $Q:=-\log |W|$, define
$$
K_{R}=\{z\in\C,~U^{\mu^{K,Q}}(z)+Q(z)-F_{K,Q}> R\}
$$
(see section \ref{Sec-one-var} for this (standard) notation in weighted potential theory). For $f$ continuous on $K$, define, for $n=1,2,...$, 
$$d_{n}^{W}(f,K):=\inf \{ \|f-W^{n}p_{n}\|_{K}: p_n \in \mathcal P_n\}.$$

\begin{theorem*}
Let $K\subset\C$ be a compact subset of a domain $G$ with $\C \setminus K$ connected such that $(G,W)$ is an approximation pair.
Let $f$ be a function continuous in $K$, and let $R<0$.
Then
\begin{equation}\label{rate}
\limsup_{n} d_{n}^{W}(f,K)^{1/n}\leq e^{R}.
\end{equation}
if and only if $f$ is the restriction to $K$ of a function holomorphic in the connected component of $K_{R}$ that contains $K$.
\end{theorem*}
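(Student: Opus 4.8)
This is a weighted Bernstein–Walsh theorem, so the natural route is to mimic the classical proof (see Walsh, or Saff–Totik Ch. VI/VIII) while keeping track of the weight $W$. The two directions are: (i) if $f$ extends holomorphically to the connected component of $K_R$ containing $K$, then $d_n^W(f,K)^{1/n}\le e^R$ eventually (the ``direct'' or Jackson-type half); and (ii) if the error decays like $e^{Rn}$, then $f$ extends holomorphically to that component of $K_R$ (the ``inverse'' or Bernstein-type half). Throughout, the key dictionary is: $W^n P_n$ behaves like a polynomial of degree $n$ but with the effective potential $U^{\mu^{K,Q}}+Q-F_{K,Q}$ in place of the Green's function $g_K(\cdot,\infty)$, and the level sets $K_R$ play the role of the classical lemniscatic neighborhoods $\{g_K<-R\}$. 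I would first record, as a preliminary lemma, a weighted Bernstein–Walsh inequality: for $p_n\in\PP_n$,
\[
|W(z)^n p_n(z)| \le \|W^n p_n\|_K \, \exp\!\big(-n[U^{\mu^{K,Q}}(z)+Q(z)-F_{K,Q}]\big),\qquad z\in G,
\]
which follows from the maximum principle applied to the subharmonic function $\tfrac1n\log|W^n p_n| + U^{\mu^{K,Q}} + Q - F_{K,Q}$ on $G$ (using that $Q=-\log|W|$ is harmonic where $W\ne 0$, that the bracket is $\le 0$ on $K$ by definition of the weighted equilibrium potential, and that — crucially — $U^{\mu^{K,Q}}+Q-F_{K,Q}\equiv 0$ on a neighborhood of $\partial G$ by the approximation-pair hypothesis via Theorem \ref{pvmain}, so there is no boundary contribution from $\partial G$). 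This inequality is exactly what converts an $e^{Rn}$ bound on $K$ into holomorphic control on $K_R$.

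\emph{Direction (ii), the inverse theorem.} Suppose $d_n^W(f,K)^{1/n}\le e^{R}$; pick near-best $p_n\in\PP_n$ with $\|f-W^np_n\|_K \le (e^{R}+o(1))^n$. Then $\|W^{n+1}p_{n+1}-W^np_n\|_K = \|W^n(W p_{n+1}-p_n)\|_K \le 2(e^{R}+o(1))^n$. Apply the weighted Bernstein–Walsh inequality above to the difference $W^n(Wp_{n+1}-p_n)$ — note $Wp_{n+1}-p_n$ is \emph{not} a polynomial, so I would instead bound each $W^np_n$ individually: on any compact $L\subset K_{R'}$ with $R<R'<0$ we get $\|W^np_n\|_L \le \|W^np_n\|_K e^{-nR'} \le C\,e^{n(R-R'+o(1))}\to 0$ geometrically, hence the telescoping series $W^1p_1 + \sum_n (W^{n+1}p_{n+1}-W^np_n)$ converges locally uniformly on the component of $K_{R'}$ containing $K$ for every $R'>R$, thus on the component of $K_R$ containing $K$ (writing that component as an increasing union of the $K_{R'}$-components); its sum is holomorphic there and equals $f$ on $K$ since $W^np_n\to f$ on $K$. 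One technical point to handle: $K_{R'}$ may a priori be disconnected, and one must check the component containing $K$ increases to the component of $K_R$ containing $K$ as $R'\downarrow R$ — this is a standard continuity-of-sublevel-sets argument but deserves a sentence.

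\emph{Direction (i), the direct theorem.} Suppose $f$ is holomorphic on $\Om$, the component of $K_R$ containing $K$. Fix $R<R'<0$; then $\bar\Om'$ (the closure of the component of $K_{R'}$ containing $K$) is a compact subset of $\Om$ on which $f$ is holomorphic, and since $(G,W)$ is an approximation pair we may — using the machinery behind Theorem \ref{pvmain}, or more directly a Hankel/Cauchy-integral construction — produce $p_n\in\PP_n$ with $\|f-W^np_n\|_{\bar\Om'}$ decaying; the point is to get the \emph{rate}. The cleanest way: represent $f(z) = \frac{1}{2\pi i}\int_{\partial\Om'} \frac{f(\zeta)}{\zeta-z}d\zeta$ for $z\in K$, and approximate the Cauchy kernel $\frac{1}{\zeta-z}$ by $W(z)^n$ times a polynomial in $z$ of degree $n$ with error governed by $\exp(n[U^{\mu^{K,Q}}(\zeta)+Q(\zeta)-F])$ on $\partial\Om'$ — this is where the approximation-pair hypothesis enters quantitatively, giving partial sums or Faber-type sums with geometric error $\le e^{nR'}$ uniformly for $z\in K$, $\zeta\in\partial\Om'$, since $U^{\mu^{K,Q}}+Q-F = R'$ on $\partial\Om'$. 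Integrating, $\|f-W^np_n\|_K \le C\,e^{nR'}$, so $\limsup_n d_n^W(f,K)^{1/n}\le e^{R'}$; letting $R'\downarrow R$ gives \eqref{rate}.

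\emph{Main obstacle.} I expect the genuinely delicate step to be the quantitative direct half (i): the classical Bernstein–Walsh construction uses Faber polynomials / Cauchy integrals adapted to $K$, and here one needs the \emph{weighted} analogue, i.e. a family $W(z)^n p_n(z)$ reproducing the Cauchy kernel with error controlled by the weighted equilibrium potential on level sets $\partial\Om'$. Producing such a family with the sharp exponential rate — rather than merely qualitative convergence as in Theorem \ref{pvmain} — is the crux; I would look to adapt the construction in \cite{PV1} (or a weighted Hermite/Faber expansion) and extract rate information from it, which should be possible precisely because the bracket $U^{\mu^{K,Q}}+Q-F$ is constant ($=R'$) on $\partial\Om'$ and $\le 0$ on $K$. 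A secondary nuisance, present in both directions, is bookkeeping with components of the sublevel sets $K_R$ (connectedness, monotonicity in $R$, and the role of the hypothesis $\C\setminus K$ connected in ruling out spurious poles of the limit function).
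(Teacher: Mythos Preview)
Your inverse direction (ii) contains a genuine gap, precisely at the step you yourself flag. You correctly note that $Wp_{n+1}-p_n$ is not a polynomial, so the weighted Bernstein--Walsh inequality cannot be applied directly to the difference $W^n(Wp_{n+1}-p_n)$. But your proposed fix, ``bound each $W^np_n$ individually,'' does not close the argument: the bound $\|W^np_n\|_L \le \|W^np_n\|_K\, e^{-nR'}$ is correct, yet $\|W^np_n\|_K\to\|f\|_K$ is merely bounded, not $O(e^{nR})$, so the right-hand side \emph{grows} like $e^{-nR'}$ (recall $R'<0$). Bounding the summands of the telescoping series term by term therefore gives no convergence at all on $K_{R'}\setminus K$. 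This is exactly the difficulty the paper singles out in the introduction (and identifies as a gap in the earlier argument of \cite{PV3}); the authors' repair is nontrivial. They approximate the entire function $W$ by polynomials $q_{\eps n}$ of degree $O(\eps n)$, so that $q_{\eps n}p_n - p_{n-1}$ \emph{is} a genuine polynomial of degree $\sim(1+\eps)n$, and then apply the weighted Bernstein--Walsh inequality with the perturbed weight $W^{1/(1+\eps)}$ (equivalently $Q_\eps=(1+\eps)^{-1}Q$), invoking the continuity statement $V_{K,Q_\eps}\to V_{K,Q}$ (Lemma~\ref{conv-V}) to absorb the $\eps$-loss in the level set.

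Two smaller remarks. First, your derivation of the preliminary Bernstein--Walsh inequality conflates $\mu^{K,Q}$ with $\mu^{\bar G,Q}$: Theorem~\ref{pvmain} yields $U^{\mu^{\bar G,Q}}+Q\equiv F_{\bar G,Q}$ on $G$, but the bracket $U^{\mu^{K,Q}}+Q-F_{K,Q}$ is strictly \emph{negative} on $\bar G\setminus K$ (Lemma~\ref{lem-G-K}), not zero near $\partial G$. The inequality you want is nonetheless true and standard (it is (\ref{ineq-BW}), from \cite{ST}), requiring no approximation-pair hypothesis. Second, for the direct half (i) the paper takes a more concrete route than your Cauchy-kernel/Faber sketch: it interpolates $f/W^n$ at $n$-th order weighted Fekete points of $K$ and reads the rate directly from the Hermite remainder formula together with the asymptotics (\ref{wtdfek}) of weighted Fekete polynomials on $K$ and on $E_{R'}^{(K)}$. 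This produces the sharp rate without having to construct a weighted reproducing family for the Cauchy kernel.
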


The technique of proof of the ``if'' direction follows that of \cite{PV1}: we use polynomial interpolation of $f/W^n$ at $n-$th order weighted Fekete points 
of $K$ and the Hermite remainder formula. For the ``only if'' direction, the standard proof considering a telescoping series $p_0 +\sum_n (W^np_n -W^{n-1}p_{n-1})$ 
where $\{W^np_n\}$ are asymptotically optimal approximators on $K$ to $f$ requires some care as the difference $W^np_n -W^{n-1}p_{n-1}$ is not a weighted holomorphic polynomial. A special case of this result was stated as Theorem 3.4 in \cite{PV3} (see the next paragraph). There is a slight gap in their proof of the ``only if'' direction (in applying their Corollary 4.2) but 
their result remains valid using our proof. 

The article \cite{PV3} considered the case where $W(z)=e^{-z}$ and $G$ is the ``inside'' of the Szeg\H o curve 
$$S:=\{z\in \C: |ze^{1-z}|=1 \ \hbox{and} \ |z|\leq 1\}.$$
Their starting point was work of Szeg\H o \cite{S} on the asymptotic zero distribution of the normalized
partial sums $\{s_n(nz):=\sum_{k=0}^n (nz)^k/k!\}$ of the exponential function. They derived an asymptotic formula for the weighted normalized
partial sums $\{e^{-nz}s_n(nz)\}$ which led them to a special case of an approximation pair $(G,W)=(G,e^{-z})$. In section 3 we replace $e^{-z}$ by $1/(1+z)$ and we obtain analogues of the results in \cite{PV3} for $G$ being the inside of the right loop of the lemniscate 
$$\{z\in \C: |z(z+1)|=1/4\}.$$

There is a third paper of Pritsker of interest. In \cite{PV2}, he works on a compact set $E$ with $W\in A(E)$, the algebra of continuous functions on $E$ which 
are holomorphic on the interior $E^o$ of $E$. Defining $A(E,W)$ to be the uniform closure of sequences of weighted holomorphic polynomials $\{W^np_n\}$ on $E$, in an attempt to determine when a weighted Mergelyan property holds, namely, $A(E,W) =A(E)$ he introduces the intermediate algebra $[W(z),zW(z)]$ generated by $W(z)$ and $zW(z)$; i.e., the uniform closure of bivariate holomorphic polynomials in 
$W(z)$ and $zW(z)$ on $E$. Thus 
$$A(E,W) \subset [W(z),zW(z)] \subset A(E).$$
We make two important observations:
\begin{enumerate}
\item $[W(z),zW(z)]$ consists of all uniform limits on $E$ of {\it linear combinations } of weighted polynomials; and 
\item $[W(z),zW(z)]$ contains the constants while this is not necessarily the case for $A(E,W)$.
\end{enumerate}

\noindent With respect to this second item, we mention that the proof of the ``only if'' direction of Theorem \ref{pvmain} only uses the hypothesis that the constant function $1$ (or any single non-vanishing holomorphic function in $G$) can be locally uniformly approximated by weighted holomorphic polynomials $\{W^np_n\}$ on $G$. We should mention that Pritsker and Varga also studied the problem of local uniform {\it rational} approximation of functions holomorphic in $G$ by weighted rational functions; see \cite{PV4}. 

Using these univariate results as motivation, we begin a study of the multivariate situation in section 4. Tom Bloom and his students M. Branker and J. Callaghan \cite{Br}, \cite{Ca} proved multivariate analogues of weighted polynomial approximation of continuous functions on certain sets in $\R^n, \ n\geq 2$ for specific symmetric weights using pluripotential theory. At first glance, weighted holomorphic polynomial approximation in several complex variables appears unnatural. However, in considering multivariate generalizations of the algebras [$W(z),zW(z)]$ and $A(E)$, something can be said. 

Given a bounded, pseudoconvex domain $G\subset \C^n$, we say $G$ has the {\it Mergelyan property} if each function in the algebra $A(\bar G)$ of continuous functions on $\bar G$ which are holomorphic on $G$ can be uniformly approximated on $\bar G$ by functions that are holomorphic on a neighborhood of $\bar G$. In particular, this holds if $A(\bar G)$ coincides with the uniform algebra $P(\bar G)$; i.e., uniform limits of holomorphic polynomials on $\bar G$. Necessary and sufficient conditions so that $G$ has the Mergelyan property 
are not known. Following \cite{W}, we state a result giving sufficient conditions on a finite family of functions in $A(\bar G)$ to be a set of generators of $A(\bar G)$ for special $G$ (Theorem \ref{wermer}). As a corollary, for certain weights $W$ on $G\subset \C^n$ we will have an analogue of ``$[W(z),zW(z)]=A(\bar G)$'' but not necessarily ``$A(\bar G,W)=[W(z),zW(z)]$''. In section 5 we give some explicit examples of generators of $A(\bar G)$. 

\section{Some results in one variable}\label{Sec-one-var} Let $K\subset \C$ be closed and let $w$ be a nonnegative, upper semicontinuous function with
$\{z\in K:w(z)>0\}$ nonpolar. If $K$ is unbounded, we require $|z|w(z)\to 0 \ \hbox{as} \ |z|\to \infty, \ z\in K.$ Let $Q:=-\log w$. For a probability measure $\tau $ supported in $K$ (we write $\tau \in \mathcal M(K)$), the logarithmic energy is $I(\tau):=\int_K \int_K \log \frac {1}{|z-t|}d\tau(t)d\tau(z)$ and we define the weighted energy
	$$I^w(\tau):=\int_K \int_K \log \frac {1}{|z-t|w(z)w(t)}d\tau(t)d\tau(z)=I(\tau) +2\int_KQd\tau.$$ 
	Then there exists a unique $\mu^{K,Q}\in \mathcal M(K)$ with 
$$I^w(\mu^{K,Q})= \inf_{\tau \in \mathcal M(K)}I^w(\tau)=:V_w.$$ 
Let $S_w$ denote the support of $\mu^{K,Q}$. 
Setting
	$$F_{K,Q}:=I^w(\mu^{K,Q})-\int_K Q d\mu^{K,Q}=V_w-\int_K Q d\mu^{K,Q},$$
the measure $\mu^{K,Q}$ is characterized by the Frostman inequalities,
\begin{align}
U^{\mu^{K,Q}}(z)+Q(z) & \geq F_{K,Q},\qquad\text{q.e.}~ z\in K,
\\[5pt]
U^{\mu^{K,Q}}(z)+Q(z) & \leq F_{K,Q},\qquad z\in S_{w}
\end{align}
where q.e. means except for a set of capacity zero. Moreover, we have 
	$$
V_{K,Q}^{*}=-U^{\mu^{K,Q}}+F_{K,Q}
$$
where 
$$V_{K,Q}(z)=\sup\{\frac{1}{deg p}\log|p(z)|: p\in \cup_n \mathcal P_n, \ ||w^{deg p} p||_K \leq 1\}$$
and $V_{K,Q}^{*}(z):=\limsup_{\zeta \to z}V_{K,Q}(\zeta)$. We write $V_K$ if $Q\equiv 0$. 

We say $K$ is regular if $V_K=V_K^*$; i.e., $V_K$ is continuous. If $K$ is regular and $Q$ is continuous, then $V_{K,Q}$ is continuous. For simplicity, we 
will assume our compact sets are regular.	For a sequence of weighted monic Fekete polynomials $F_{n}$ with respect to $K,Q$, i.e., $F_n(z)=\prod_{j=1}^n(z-t_j^{(n)})$ where $t_1^{(n)},..., t_n^{(n)}\in K$ attain the supremum  in 
$$\sup_{z_1,...,z_n\in K}\prod_{1\leq i<j\leq n}|z_i-z_j|w(z_i)w(z_j),$$
it is known that
\begin{equation}\label{wtdfek}
\lim_{n}\|w^{n}F_{n}\|^{1/n}_{K}=e^{-F_{K,Q}} \ \hbox{and} \
\frac1n\log|F_{n}(z)|\to -U^{\mu^{K,Q}}(z),\qquad z\in\C\setminus K
\end{equation}
where this latter convergence is locally uniform (\cite{ST} p. 150). Furthermore, for $P_n\in \mathcal P_n$ we have a weighted Bernstein-Walsh inequality (\cite{ST} p. 153) :
\begin{equation}\label{ineq-BW}
|P_{n}(z)|\leq\|w^{n}P_{n}\|_{S_{w}}e^{n(-U^{\mu^{K,Q}}(z)+F_{K,Q})},\qquad z\in\C.
\end{equation}

Let $G$ be an open set in $\C$ and let $W$ be a non-vanishing holomorphic function in $G$. In this setting, $w:=|W|$ and $Q:=-\log w = -\log |W|$.
	\begin{lemma}\label{lem-G-K}
Let $(G,W)$ be a pair with the approximation property, and let $K$ be a regular compact subset of $G$. Then
\begin{equation}
U^{\mu^{K,Q}}(z)+Q(z)=F_{K,Q},\quad z\in K,
\end{equation}
and
\begin{equation}
U^{\mu^{K,Q}}(z)+Q(z)<F_{K,Q},\quad z\in \bar G\setminus K.
\end{equation}
\end{lemma}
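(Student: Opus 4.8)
The plan is to derive the first identity from the approximation hypothesis together with the weighted Bernstein--Walsh inequality \eqref{ineq-BW} and the Frostman characterization of $\mu^{K,Q}$, and then to obtain the strict inequality by a minimum-principle argument on $G\setminus K$.

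\emph{The identity on $K$.} Since $(G,W)$ has the approximation property, the constant function $1$ is a locally uniform limit on $G$ of weighted holomorphic polynomials: for every compact $K'$ with $K\subseteq K'\subseteq G$ there are $p_n\in\mathcal P_n$ with $\|1-W^np_n\|_{K'}\to0$. Writing $w=|W|$, this forces $w^n|p_n|\to1$ and hence $\frac1n\log|p_n|\to-\log w=Q$ uniformly on $K'$; in particular $\|w^np_n\|_K\to1$. Applying \eqref{ineq-BW} to $p_n$ and using $S_w=\supp\mu^{K,Q}\subseteq K$ gives
\[
\tfrac1n\log|p_n(z)|\le\tfrac1n\log\|w^np_n\|_K-U^{\mu^{K,Q}}(z)+F_{K,Q},\qquad z\in\C,
\]
so $\limsup_n\frac1n\log|p_n(z)|\le-U^{\mu^{K,Q}}(z)+F_{K,Q}$ on $\C$. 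Comparing with the limit of $\frac1n\log|p_n|$ on $K'$ and letting $K'$ exhaust $G$ yields $U^{\mu^{K,Q}}(z)+Q(z)\le F_{K,Q}$ for all $z\in G$. Since $K$ is regular and $Q=-\log|W|$ is continuous on $G$, the function $U^{\mu^{K,Q}}+Q$ is continuous there and the Frostman inequality $U^{\mu^{K,Q}}+Q\ge F_{K,Q}$ holds at every point of $K$ (not merely q.e.); combining the two inequalities gives the first displayed identity.

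\emph{Reduction of the strict inequality.} Put $h:=F_{K,Q}-U^{\mu^{K,Q}}-Q$ on $G$. By the above $h\ge0$ on $G$, $h$ is continuous, $h\equiv0$ on $K$, and $h$ is harmonic on $G\setminus K$ since $\supp\mu^{K,Q}\subseteq K$ and $\log|W|$ is harmonic on $G$. On each component $P$ of $G\setminus K$ a nonnegative harmonic function is either $\equiv0$ or strictly positive, so it suffices to exclude $h\equiv0$ on any $P$. Here one uses that $\C\setminus K$ is connected --- the hypothesis under which the Bernstein--Walsh-type theorem of the introduction is stated, and which is needed for the strict inequality: it forces every component $P$ of $G\setminus K$ to satisfy $\partial P\cap\partial G\neq\emptyset$, for a component with $\bar P\subseteq G$ would have $\partial P\subseteq\partial K$ and hence be a bounded component of $\C\setminus K$.

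\emph{The main obstacle.} Suppose $h\equiv0$ on such a component $P$; then $U^{\mu^{K,Q}}+Q\equiv F_{K,Q}$ on $P$, and letting $z\to\zeta$ along $P$ for $\zeta\in\partial P\cap\partial G$ (a point where $U^{\mu^{K,Q}}$ is harmonic, as $\zeta\notin K$) forces $|W(z)|\to e^{U^{\mu^{K,Q}}(\zeta)-F_{K,Q}}\in(0,\infty)$. To turn this into a contradiction I would invoke Theorem~\ref{pvmain}, which furnishes a probability measure $\mu$ on $\partial G$ and a constant $F$ with $U^{\mu}+Q\equiv F$ on $G$; subtracting gives $U^{\mu-\mu^{K,Q}}\equiv F-F_{K,Q}$ on $K\cup P$, while the signed measure $\mu-\mu^{K,Q}$ has total mass $0$ and support in $\partial G\cup K$, so $U^{\mu-\mu^{K,Q}}(z)\to0$ as $z\to\infty$. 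A maximum-principle analysis of the harmonic function $U^{\mu-\mu^{K,Q}}$ on the components of $\C\setminus(\partial G\cup K)$, together with the Frostman inequalities satisfied by $\mu$ on $\partial G$, should then yield a contradiction. This transfer of the equality $h\equiv0$ across $\partial G$ is the delicate point; once it is settled, $h>0$ on every component of $G\setminus K$, i.e.\ $U^{\mu^{K,Q}}+Q<F_{K,Q}$ on $G\setminus K$, and the inequality persists on $\bar G\setminus K$ (wherever $Q$ is defined there) by continuity together with the boundary estimate above.
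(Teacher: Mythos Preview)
Your argument for the identity on $K$ is correct and close to the paper's. The paper phrases it through extremal functions: from $K\subset\bar G$ one has $V_{K,Q}\ge V_{\bar G,Q}$, and the approximation hypothesis (via Theorem~\ref{pvmain}) gives $V_{\bar G,Q}=Q$ on $G$; combined with the trivial bound $V_{K,Q}\le Q$ on $K$ this forces $V_{K,Q}=Q$ on $K$, i.e., the equality. Your route through Bernstein--Walsh applied to polynomials with $W^np_n\to1$ produces the same inequality $U^{\mu^{K,Q}}+Q\le F_{K,Q}$ on $G$ more explicitly.

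For the strict inequality your proof has a genuine gap, which you yourself locate. The function $h=F_{K,Q}-U^{\mu^{K,Q}}-Q$ lives only on $G$, so a minimum-principle argument confined to components of $G\setminus K$ cannot access information at $\partial G$ or at infinity; this is what forces you into the ``delicate'' transfer across $\partial G$ and into importing the extra hypothesis that $\C\setminus K$ be connected. The paper bypasses all of this with one move: using $U^{\mu^{\bar G,Q}}+Q=F_{\bar G,Q}$ on $G$ (from Theorem~\ref{pvmain}) to eliminate $Q$, it works instead with
\[
U^{\mu^{\bar G,Q}}-U^{\mu^{K,Q}},
\]
a difference of logarithmic potentials which is defined and superharmonic on all of $\C\setminus K$, equals the constant $F_{\bar G,Q}-F_{K,Q}$ on $\partial K$, and vanishes at infinity. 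The minimum principle now applies on $\bar\C\setminus K$; if the minimum were attained in the interior the function would be constant, hence identically $0$, contradicting the fact that $\mu^{\bar G,Q}$ is supported on $\partial G$ and thus contributes Riesz mass outside $K$. You were essentially one step away when you introduced $U^{\mu-\mu^{K,Q}}$ (indeed $\mu=\mu^{\bar G,Q}$ here), but the point is that no separate Frostman analysis on $\partial G$ is needed: global superharmonicity plus the value at $\infty$ finishes the argument. Substituting $U^{\mu^{\bar G,Q}}=F_{\bar G,Q}-Q$ back on $\bar G$ then yields the second displayed inequality directly.
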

\begin{proof}
For the first part, we have
$$Q(z)\geq V_{K,Q}(z),~z\in K,\quad\text{ and }\quad V_{K,Q}(z)\geq V_{\bar G,Q}(z)=Q(z),~z\in G,$$ 
hence, for $z\in K$,
$$
V_{K,Q}(z)=Q(z),\quad\text{or }\quad U^{\mu^{K,Q}}(z)+Q(z)=F_{K,Q}.
$$

For the second part, we consider the function $U^{\mu^{\bar G,Q}}-U^{\mu^{K,Q}}$ which is superharmonic outside of $K$ and vanishes at infinity. Note $U^{\mu^{K,Q}}$ restricted to $\p K$ is continuous and hence it is continuous on $\p K$ as a function in $G$. We have
$$
U^{\mu^{\bar G,Q}}(z)-U^{\mu^{K,Q}}(z)=[F_{\bar G,Q}-Q(z)]-[F_{K,Q}-Q(z)]=F_{\bar G,Q}-F_{K,Q},\quad
z\in\p K,
$$
hence, by the minimum principle for superharmonic functions applied on the complement of $K$,
$$
U^{\mu^{\bar G,Q}}(z)-U^{\mu^{K,Q}}(z)\geq F_{\bar G,Q}-F_{K,Q},\quad z\in\bar \C\setminus K.
$$
If equality would hold somewhere in $\bar \C\setminus K$, then the function $U^{\mu^{\bar G,Q}}-U^{\mu^{K,Q}}$ would be constant, hence equal to $0$. As this is not the case (e.g., $\mu^{\bar G,Q}$ is supported on $\partial G$), 
$$
U^{\mu^{\bar G,Q}}(z)-U^{\mu^{K,Q}}(z)> F_{\bar G,Q}-F_{K,Q},\quad z\in\bar \C\setminus K.
$$
On $\bar G\setminus K$, we get
$$
-Q(z)-U^{\mu^{K,Q}}(z)>-F_{K,Q},\qquad z\in\bar G\setminus K,
$$
or
$$
U^{\mu^{K,Q}}(z)+Q(z)<F_{K,Q},\qquad z\in\bar G\setminus K.
$$
\end{proof}
\begin{remark}
 From Lemma \ref{lem-G-K} follows that if $(G,W)$ is an approximation pair then, for any open subset $G'\subset G$, $(G',W)$ is also an approximation pair (since, by the above lemma, Theorem \ref{pvmain} applies).
\end{remark}

We will need the following result (cf., \cite{BL}, Lemma 7.3).
\begin{lemma}\label{conv-V}
Let $K$ be a compact set and $Q_{j}$ a sequence of weights on $K$ with $Q_{j}\uparrow Q$. Then
$$
\lim_{j}V_{K,Q_{j}}(z)=V_{K,Q}(z),\qquad z\in\C.
$$
If $K$ is regular and $Q_{j}$ and $Q$ are continuous on $K$, then the convergence is locally uniform.
\end{lemma}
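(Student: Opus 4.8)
The plan is to prove the two inequalities $\limsup_{j}V_{K,Q_{j}}\le V_{K,Q}$ and $\liminf_{j}V_{K,Q_{j}}\ge V_{K,Q}$ on $\C$. The first is immediate from the definition of the extremal function: since $Q_{j}\le Q_{j+1}\le Q$, the weights satisfy $w_{j}:=e^{-Q_{j}}\ge w_{j+1}\ge w:=e^{-Q}$ on $K$, so a polynomial $p$ with $\|w_{j}^{\deg p}p\|_{K}\le 1$ automatically satisfies $\|w_{j+1}^{\deg p}p\|_{K}\le 1$ and $\|w^{\deg p}p\|_{K}\le 1$; taking suprema gives $V_{K,Q_{j}}\le V_{K,Q_{j+1}}\le V_{K,Q}$ pointwise. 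Hence $V_{\infty}:=\lim_{j}V_{K,Q_{j}}=\sup_{j}V_{K,Q_{j}}$ exists with $V_{\infty}\le V_{K,Q}$, and everything reduces to the opposite inequality.

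For the reverse inequality when $K$ is regular and the $Q_{j},Q$ are continuous I would argue directly, with no potential theory. Fix $z_{0}\in\C$ and $\varepsilon>0$, and pick $p\in\mathcal P_{n}$ with $\|w^{n}p\|_{K}\le 1$ and $\tfrac1n\log|p(z_{0})|>V_{K,Q}(z_{0})-\varepsilon$. From $\|w^{n}p\|_{K}\le 1$ we have $|p|\le e^{nQ}$ on $K$, so $\|w_{j}^{n}p\|_{K}=\|e^{-nQ_{j}}p\|_{K}\le e^{n\eta_{j}}$ where $\eta_{j}:=\sup_{K}(Q-Q_{j})$, and $\eta_{j}\downarrow 0$ by Dini's theorem. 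Then $p/\|w_{j}^{n}p\|_{K}$ is admissible for $(K,Q_{j})$, so $V_{K,Q_{j}}(z_{0})\ge \tfrac1n\log|p(z_{0})|-\eta_{j}$; letting $j\to\infty$ and then $\varepsilon\to0$ gives $V_{\infty}(z_{0})\ge V_{K,Q}(z_{0})$, i.e. $V_{\infty}=V_{K,Q}$. Since in this situation $V_{K,Q_{j}}$ and $V_{K,Q}$ are continuous, an increasing sequence of continuous functions converging pointwise to a continuous function converges uniformly on compacta (Dini again), which yields the asserted locally uniform convergence.

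For the general (merely upper semicontinuous) case the rescaling step breaks down, because $\sup_{K}(Q-Q_{j})$ need not tend to $0$, so I would pass through the weighted equilibrium measures $\mu_{j}:=\mu^{K,Q_{j}}$ and $\mu:=\mu^{K,Q}$. By monotone convergence (here $K$ is compact, hence $Q$ is bounded below) $I^{w_{j}}(\tau)=I(\tau)+2\int Q_{j}\,d\tau\uparrow I^{w}(\tau)$ for each $\tau\in\mathcal M(K)$, so $V_{w_{j}}$ increases to a limit $\le V_{w}$; testing $V_{w_{j}}$ against $\mu$ and using lower semicontinuity of $\tau\mapsto I^{w_{k}}(\tau)$ (fixed $k$) along a weak limit of a subsequence of $(\mu_{j})$, together with uniqueness of the minimizer of $I^{w}$, forces $\mu_{j}\to\mu$ weakly and $V_{w_{j}}\to V_{w}$; a short squeeze then gives $\int Q_{j}\,d\mu_{j}\to\int Q\,d\mu$ and hence $F_{K,Q_{j}}\to F_{K,Q}$. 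Writing $V_{K,Q_{j}}^{*}=-U^{\mu_{j}}+F_{K,Q_{j}}$ and $V_{K,Q}^{*}=-U^{\mu}+F_{K,Q}$: for $z\notin K$ the kernel $t\mapsto\log\frac1{|z-t|}$ is bounded and continuous on $K$, so $U^{\mu_{j}}(z)\to U^{\mu}(z)$ and thus $V_{K,Q_{j}}(z)\to V_{K,Q}(z)$ there; for $z\in K$, the lower envelope theorem for logarithmic potentials gives $\liminf_{j}U^{\mu_{j}}=U^{\mu}$ quasi-everywhere, which, combined with the (standard) facts that $V_{K,Q_{j}}=V_{K,Q_{j}}^{*}$ q.e. and $V_{K,Q}=V_{K,Q}^{*}$ q.e. and with $F_{K,Q_{j}}\to F_{K,Q}$, gives $V_{\infty}=V_{K,Q}$ quasi-everywhere.

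The delicate point — and the reason the statement is quoted from \cite[Lemma 7.3]{BL} rather than reproved in detail — is upgrading this quasi-everywhere equality to equality at \emph{every} point of $K$ in the general case: at the negligible (polar) points of $(K,Q)$ one has $V_{K,Q}<V_{K,Q}^{*}$, and the potential-theoretic identities above control only $V_{K,Q}^{*}$, so the residual polar set must be handled by an argument exploiting its structure (e.g. that $V_{K,Q}$ is a pointwise supremum of the subharmonic functions $\tfrac1{\deg p}\log|p|$ over admissible $p$, together with the fact that the monotone limit $V_{\infty}$ has a subharmonic upper semicontinuous regularization in the Lelong class). I expect this exceptional-set bookkeeping to be the main obstacle; note, however, that in the regular, continuous-weight setting that is actually used elsewhere in the paper there is no negligible set, and the elementary argument of the second paragraph already proves both assertions.
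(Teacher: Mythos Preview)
The paper does not prove this lemma; it is stated with a reference to \cite[Lemma~7.3]{BL} and then used as a black box in the proof of Theorem~\ref{bwpv}. There is therefore no paper proof to compare against, and I assess your argument on its own.

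Your elementary argument in the second paragraph, for the regular, continuous-weight case, is correct and self-contained: monotonicity of the admissible families gives $V_{K,Q_{j}}\uparrow V_{\infty}\le V_{K,Q}$; Dini on the compact $K$ gives $\eta_{j}:=\sup_{K}(Q-Q_{j})\downarrow 0$; rescaling a $(K,Q)$-admissible $p$ by $\|w_{j}^{n}p\|_{K}\le e^{n\eta_{j}}$ produces a $(K,Q_{j})$-admissible competitor at cost $\eta_{j}$, whence $V_{\infty}\ge V_{K,Q}$; and a second application of Dini (continuous $V_{K,Q_{j}}$ increasing to continuous $V_{K,Q}$) upgrades to locally uniform convergence. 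Since this is exactly the setting invoked in the proof of Theorem~\ref{bwpv} ($K$ regular, $Q=-\log|W|$ continuous, $Q_{\varepsilon}=(1+\varepsilon)^{-1}Q$), your proof already covers everything the paper actually uses.

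For the general (merely admissible, upper semicontinuous) weights your potential-theoretic outline---weak convergence $\mu_{j}\to\mu$, $V_{w_{j}}\to V_{w}$, the squeeze giving $F_{K,Q_{j}}\to F_{K,Q}$, and the lower envelope theorem---is the standard route and correctly yields $V_{\infty}=V_{K,Q}$ quasi-everywhere. You then honestly flag the remaining step, namely upgrading this to equality at \emph{every} point of $K$, as the genuine obstacle. That diagnosis is accurate: closing the polar exceptional set requires an additional argument (this is precisely the content behind the cited reference), and as written your general-case proof is not complete. But the gap is confined to a part of the statement the paper never uses.
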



As in the introduction, for $f$ continuous on $K$, define
$$d_{n}^{W}(f,K):=\inf \{ \|f-W^{n}p_{n}\|_{K}: p_n \in \mathcal P_n\},$$
and for $R<0$, we define 
$$
K_{R}=\{z\in\C,~U^{\mu^{K,Q}}(z)+Q(z)-F_{K,Q}> R\},
$$
which always contains $K$.
The boundary of $K_{R}$ is denoted by
\begin{equation}\label{def-ER}
E_{R}=\{z\in\C,~U^{\mu^{K,Q}}(z)+Q(z)-F_{K,Q}=R\}.
\end{equation}

In the next theorem, we assume that the weight $W$ is an entire function. This implies, in particular, that the function $zW(z)$ is unbounded in $\C$, and thus $U^{\mu^{K,Q}}(z)+Q(z)$ is unbounded below (note $U^{\mu^{K,Q}}(z)$ behaves like $-\log|z|$ near infinity), so that, for any $R<0$, the level set $E_{R}$ is not empty.
Also, without loss of generality, we normalize $W$ so that $|W|\leq1$ on $K$. 
\begin{theorem} \label{bwpv} 
Let $K\subset\C$ be a compact subset of a domain $G$ with $\C \setminus K$ connected such that $(G,W)$ is an approximation pair.
Let $f$ be a function continuous in $K$, and let $R<0$.
Then
\begin{equation}\label{rate}
\limsup_{n} d_{n}^{W}(f,K)^{1/n}\leq e^{R}.
\end{equation}
if and only if $f$ is the restriction to $K$ of a function holomorphic in the connected component of $K_{R}$ that contains $K$.
\end{theorem}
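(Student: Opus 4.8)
The plan is to prove both implications by combining weighted Fekete point interpolation with the Hermite remainder formula for the ``if'' direction, and a careful telescoping argument for the ``only if'' direction, as sketched in the introduction. For the ``if'' direction, suppose $f$ extends holomorphically to the component $\Omega$ of $K_R$ containing $K$. Fix $R' \in (R,0)$ with $\bar{K_{R'}} \subset \Omega$ (possible by continuity of $U^{\mu^{K,Q}}+Q$, which holds since $K$ is regular and $Q$ is continuous on $K$ as $W$ is entire and non-vanishing). Let $F_n$ be the weighted monic Fekete polynomial of degree $n$ for $(K,Q)$ and let $p_n \in \mathcal P_n$ be the Lagrange interpolant of $f/W^n$ at the Fekete points $t_1^{(n)},\dots,t_n^{(n)}$; here one must check $f/W^n$ is well-defined and holomorphic near $K$, which it is since $W$ is non-vanishing on $G$. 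The Hermite remainder formula gives, for $z \in K$,
\begin{equation*}
\frac{f(z)}{W(z)^n} - p_n(z) = \frac{1}{2\pi i}\oint_{E_{R'}} \frac{F_n(z)}{F_n(\zeta)}\,\frac{f(\zeta)/W(\zeta)^n}{\zeta - z}\,d\zeta,
\end{equation*}
so that $\|f - W^n p_n\|_K \le \|W^n\|_K \cdot \frac{\text{length}(E_{R'})}{2\pi\,\dist(K,E_{R'})}\cdot \frac{\|F_n\|_K}{\min_{E_{R'}}|F_n|}\cdot \|f/W^n\|_{E_{R'}}$. Using \eqref{wtdfek}, $\|F_n\|_K^{1/n} = \|w^n F_n\|_K^{1/n}/\|w^n\|_K^{1/n} \to e^{-F_{K,Q}}$ (after normalizing $\|w\|_K$ appropriately; more carefully one uses $\frac1n\log|F_n| \to -U^{\mu^{K,Q}}$ uniformly on $E_{R'}$ and on $K$), while $\|w^n\|$ contributions and the value of $U^{\mu^{K,Q}}+Q$ on $E_{R'}$ combine to yield $\limsup_n d_n^W(f,K)^{1/n} \le e^{R'}$. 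Letting $R' \downarrow R$ gives \eqref{rate}.

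For the ``only if'' direction, assume $\limsup_n d_n^W(f,K)^{1/n} \le e^R$ and pick asymptotically optimal $p_n \in \mathcal P_n$ with $\|f - W^n p_n\|_K \le C\rho^n$ for some $\rho < e^{R''} < 1$ with $R'' \in (R,0)$ if possible, or more precisely with $\limsup \|f-W^np_n\|_K^{1/n} \le e^R$. The naive telescoping $W^0 p_0 + \sum_n (W^n p_n - W^{n-1}p_{n-1})$ fails because $W^n p_n - W^{n-1}p_{n-1}$ is not of the form $W^N q_N$. The fix, which is the crux of the argument, is to use the approximation property of $(G,W)$ itself: since $(G,W)$ is an approximation pair, the constant $1$ (equivalently $1/W$, a non-vanishing holomorphic function on $G$) can be locally uniformly approximated on $G$ by weighted holomorphic polynomials, so on a neighborhood of $K$ we can write $W^{-1} \approx W^{m} r_m$ for suitable $r_m \in \mathcal P_m$ with good rate. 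Then $W^{n-1}p_{n-1} = W^n \cdot (W^{-1}p_{n-1}) \approx W^n \cdot W^m r_m p_{n-1} = W^{n+m}(r_m p_{n-1})$, converting each telescoping difference into a genuine weighted polynomial of controlled degree and controlled norm. Summing the resulting series and using the weighted Bernstein-Walsh inequality \eqref{ineq-BW} to propagate the estimates off $K$, one shows the series converges locally uniformly on $K_R$ (where $U^{\mu^{K,Q}}+Q - F_{K,Q} > R$ exactly compensates the geometric growth rate $e^R$), hence $f$ extends holomorphically there; one then restricts to the component containing $K$.

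I expect the main obstacle to be the bookkeeping in the ``only if'' direction: one must simultaneously control the degrees (which grow by $m$ at each telescoping step, so $m = m_n$ should be taken growing slowly, e.g. $m_n = o(n)$, to keep total degree $\sim n$), the approximation rates (the error in approximating $W^{-1}$ by $W^{m_n}r_{m_n}$ must be shown to decay fast enough not to spoil the $e^R$ rate — this is where Lemma \ref{conv-V} and local uniformity of the relevant convergence enter), and the norms of the intermediate polynomials $r_{m_n}p_{n-1}$ via \eqref{ineq-BW}. This is precisely the point the introduction flags as the ``slight gap'' in \cite{PV3}: applying a Bernstein-Walsh-type bound to the telescoping differences requires that these differences actually be weighted polynomials, which forces the $W^{-1}$-approximation device above rather than a direct appeal. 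A clean way to organize this is to first prove the statement for $f$ a genuine weighted polynomial (trivial) and $f$ holomorphic on a neighborhood of $\bar{K_R}$ (via the ``if'' direction applied with $R$ replaced by values close to the boundary level of that neighborhood), then obtain the general case by an exhaustion of $K_R$ by such neighborhoods together with a normal families / diagonal argument on the extensions.
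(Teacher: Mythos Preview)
Your ``if'' direction is correct and matches the paper's argument exactly: interpolate $f/W^n$ at weighted Fekete points, apply the Hermite remainder formula along $E_{R'}^{(K)}$, use the asymptotics \eqref{wtdfek}, and let $R'\downarrow R$.

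The ``only if'' direction, however, has a genuine gap. Your proposed device is to approximate $W^{-1}$ by weighted polynomials $W^m r_m$ (via the approximation property of $(G,W)$), so that $W^{n-1}p_{n-1}\approx W^{n+m}(r_m p_{n-1})$. But this does \emph{not} convert the telescoping difference into a single weighted polynomial: after this replacement one has $W^n p_n - W^{n+m}(r_m p_{n-1})$, and factoring out either $W^n$ or $W^{n+m}$ leaves a bracket that still contains a power of $W$ and is therefore not a polynomial. The mismatch of weight-exponents persists, so the weighted Bernstein--Walsh inequality \eqref{ineq-BW} cannot be applied to the difference as a whole. No choice of $m=m_n$ (growing slowly or otherwise) repairs this structural defect.

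The paper's fix is different and is precisely what makes the argument go through: one approximates $W$ itself by an \emph{ordinary} polynomial $q_{\eps n}$ of degree $O(\eps n)$, using the classical (unweighted) Bernstein--Walsh theorem and the standing hypothesis that $W$ is entire. Writing $W^n p_n - W^{n-1}p_{n-1}=W^{n-1}(Wp_n-p_{n-1})$ and replacing $W$ by $q_{\eps n}$ produces $W^{n-1}(q_{\eps n}p_n-p_{n-1})$, where now $q_{\eps n}p_n-p_{n-1}$ \emph{is} a genuine polynomial of degree $\sim(1+\eps)n$. After multiplying by one more factor of $W$ (legitimate since $W$ may be assumed nonzero on $E_{R_1}$), one has $W^n(q_{\eps n}p_n-p_{n-1})=(W^{1/(1+\eps)})^{(1+\eps)n}(q_{\eps n}p_n-p_{n-1})$, and \eqref{ineq-BW} applies with the modified weight $Q_\eps=(1+\eps)^{-1}Q$. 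Lemma~\ref{conv-V} then transfers the level-set condition from $Q_\eps$ back to $Q$ as $\eps\downarrow 0$. Thus the role you assigned to the approximation property of $(G,W)$ is in fact played by the entirety of $W$ together with the unweighted Bernstein--Walsh theorem; the approximation property enters only via Lemma~\ref{lem-G-K} to guarantee $K\subset K_R$.
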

\begin{proof}
Denote by $K_{R}^{(K)}$ the component of $K_{R}$ that contains $K$, and by $E_{R}^{(K)}$ its boundary.
Assume $f$ is holomorphic in $K_{R}^{(K)}$. We consider the interpolants $P_{n}(z)$ of $f(z)/W(z)^{n}$ at the weighted Fekete points on $K$. 
For $F_{n}$ the $n-$th order monic Fekete polynomial associated to $K,Q$, for $z\in K$, and some $R'$ with $R<R'<0$, we have, by Hermite's remainder formula,
$$
f(z)-W(z)^{n}P_{n}(z)=\frac{W(z)^{n}}{2i\pi}\int_{E_{R'}^{(K)}}\frac{F_{n}(z)}{F_{n}(\xi)}\frac{f(\xi)}{W(\xi)^{n}}\frac{d\xi}{\xi-z}.
$$
From the asymptotics of $W^{n}F_{n}$ in (\ref{wtdfek}), for $z\in K$ and $\xi \in E_{R'}^{(K)}$, we have
$$\frac{|W(z)^{n}F_{n}(z)|}{|W(\xi)^{n}F_{n}(\xi)|} \asymp \exp [n\bigl(-F_{K,Q}-(-F_{K,Q}-R')\bigr)]=e^{nR'}.$$
Since $|f|$ is bounded on $K$ and $|\xi-z|$ is bounded below for $z\in K$ and $\xi \in E_{R'}^{(K)}$, 
$$\limsup_{n} ||f-W^nP_n||_K^{1/n} \leq e^{R'}.$$
Letting $R'$ decrease to $R$ gives (\ref{rate}).

Conversely, assume (\ref{rate}) holds true, that is, there exists a sequence of polynomials $p_{n}$ such that
$$
\limsup_{n} \|f-W^{n}p_{n}\|_{K}^{1/n}\leq e^{R}.
$$
We choose any $R<R_{1}<0$ and show that $f$ is holomorphic in $K_{R_{1}}^{(K)}$, which is sufficient.
We adapt the usual proof of the indirect implication in the Bernstein-Walsh theorem. Thus we want to show that the series 
$$p_{0}+\sum_{n}(W^{n}p_{n}-W^{n-1}p_{n-1})$$ 
converges uniformly in $K_{R_{1}}$. For given $\eps>0$ and $\rho<0$, let $q_{\eps n}$ be a sequence of polynomials of degree $O(\eps n)$ such that
\begin{equation}\label{BW-W}
\limsup_{n}\|W-q_{\eps n}\|_{K_{R_{1}}}^{1/n}\leq e^{\rho}.
\end{equation}
The usual Bernstein-Walsh theorem shows that such a sequence exists (recall that $W$ is assumed to be an entire function).
We have
$$
\|W^{n}p_{n}-W^{n-1}p_{n-1}\|_{K_{R_{1}}}\leq
\|W^{n-1}(q_{\eps n}p_{n}-p_{n-1})\|_{K_{R_{1}}}
+\|(W-q_{\eps n})W^{n-1}p_{n}\|_{K_{R_{1}}}.
$$
Without loss of generality, we may assume that $R_{1}$ is such that $W$ does not vanish on $E_{R_{1}}$.
Thus, we may multiply the upper bound by $W$ to get
$$
\|W^{n}(q_{\eps n}p_{n}-p_{n-1})\|_{K_{R_{1}}}
+\|(W-q_{\eps n})W^{n}p_{n}\|_{K_{R_{1}}}.
$$
Because of (\ref{ineq-BW}) and (\ref{BW-W}), the second norm decreases geometrically to zero as soon as $\rho<R_{1}$.
We next consider the first norm. Since $|W|\leq1$ on $K$, we have $Q\geq0$ on $K$. We define the weights 
$$
Q_{\eps}=(1+\eps)^{-1}Q\uparrow Q\quad\text{as}\quad\eps\downarrow0.
$$
By Lemma \ref{conv-V}, for any $R_{2}<R_{1}$, there exists $\eps>0$ sufficiently small so that
\begin{equation}\label{ineq-R2}
U^{\mu^{Q_{\eps},K}}(z)+Q_{\eps}(z)-F_{K,Q_{\eps}}\geq R_{2},\quad z\in K_{R_{1}}.
\end{equation}
Since $q_{\eps n}p_{n}-p_{n-1}$ is of degree $n+O(\eps n)\sim(1+\eps)n$ and
$$
W^{n}(q_{\eps n}p_{n}-p_{n-1})=(W^{1/(1+\eps)})^{(1+\eps)n}(q_{\eps n}p_{n}-p_{n-1}),
$$
we get by using the weighted Bernstein-Walsh inequality (\ref{ineq-BW}) with weight $W^{1/(1+\eps)}$, along with (\ref{ineq-R2}), that
\begin{align*}
\|W^{n}(q_{\eps n}p_{n}-p_{n-1})\|_{K_{R_{1}}} & \leq e^{-(1+\eps)nR_{2}}\|W^{n}(q_{\eps n}p_{n}-p_{n-1})\|_{K}
\\[5pt]
& \leq e^{-(1+\eps)nR_{2}}\left(\|W^{n+1}p_{n}-W^{n}p_{n-1}\|_{K}+ \|W^{n}(q_{n}-W)p_{n}\|_{K}\right)
\\[5pt]
& \leq 2\|W\|_{K}e^{n(-(1+\eps)R_{2}+R)}+2\|f\|_{K}e^{n(-(1+\eps)R_{2}+\rho)}.
\end{align*}
Hence, we see that three inequalities need to be satisfied, namely,
$$
R<(1+\eps)R_{2},\qquad\rho<(1+\eps)R_{2},\qquad\rho<R_{1}.
$$
The first one can be satisfied by taking $R_{2}$ sufficiently close to $R_{1}$ and $\eps$ small. The second one can also be satisfied because $\rho$ can be chosen as any large negative number (since $W$ is an entire function). Finally, the second inequality implies the third one since $(1+\eps)R_{2}<R_{1}$.
\end{proof}
We illustrate the previous theorem by describing the level lines $E_{R}$, $R<0$, in the classical case of incomplete polynomials, see \cite{G, L, SV} and \cite[Section VI.1]{ST}. Let $W(z)=z$, i.e.\ $Q(z)=-\log|z|$. It is known that weighted approximation, with weight $W$ of any continuous function $f$ on a closed interval $[c,1]$ is possible if and only if $c>1/4$ (if $c=1/4$, the additional assumption that $f(1/4)=0$ is needed). Hence, we consider the case 
$K=[c,1]$, $1/4<c<1$. The weighted equilibrium measure is known, namely
$$
\mu^{K,Q}=2\mu^{K}-\hat\delta_{0},
$$
where $\mu^{K}$ is the unweighted equilibrium measure of $K$, and $\hat\delta_{0}$ is the balayage of $\delta_{0}$ on $K$. Up to an additive constant, the difference $U^{\delta_{0}}(z)-U^{\hat\delta_{0}}(z)$, $z\in\C\setminus K$, is equal to the Green function $g_{\C\setminus K}(z,0)$ of the complement of $K$ with pole at 0, see \cite[Section II.5]{ST}. Hence, we are interested in the level lines of
$$
2U^{\mu^{K}}(z)-U^{\delta_{0}}(z)+g_{\C\setminus K}(z,0)+Q(z)=2U^{\mu^{K}}(z)+
g_{\C\setminus K}(z,0),
$$
or, equivalently, the level lines of
$$
g_{\C\setminus K}(z,0)-2g_{\C\setminus K}(z,\infty)=
\log\left|\frac{1-\bar{\vphi(0)}\vphi(z)}{\vphi(z)-\vphi(0)}\right|
-2\log\left|\vphi(z)\right|,
$$
where $\vphi$ is the conformal map from $\C\setminus[c,1]$ to $\C\setminus\D$,
$$
\vphi(z)=\frac{2}{1-c}\left[z-(1+c)/2+\sqrt{(z-c)(z-1)}\right].
$$
\begin{figure}[htb]
\centering
\includegraphics[height=6cm]{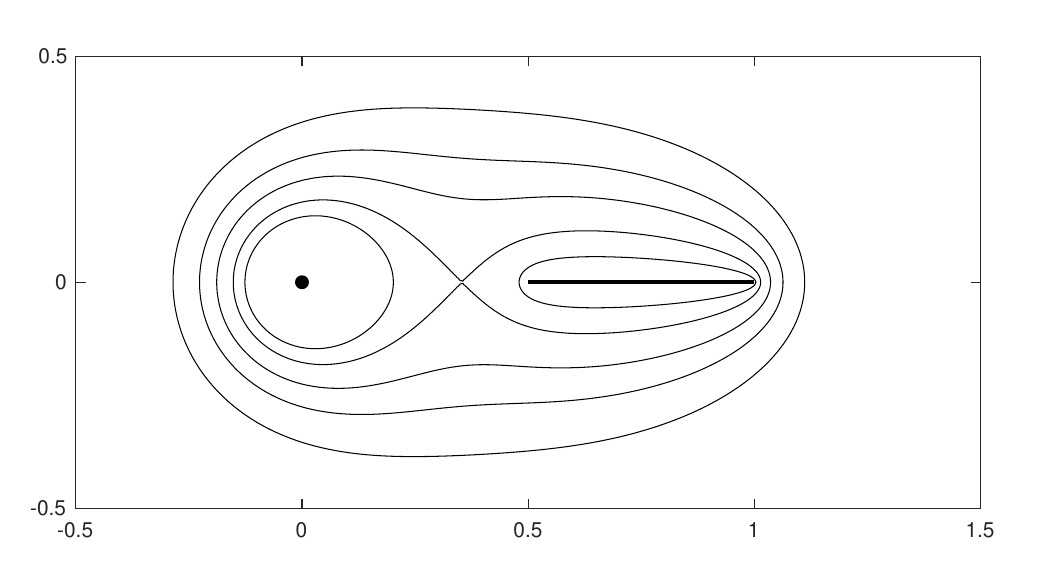}
\caption{A few level lines $E_{R}$, defined in (\ref{def-ER}), for the case of the segment $[1/2,1]$ and the weight $W(z)=z$.}
\label{Level-segm}
\end{figure}
A few of these level lines $E_{R}$ are shown in Figure \ref{Level-segm}. For $R<0$ close to 0, $E_{R}$ is made of two disjoint curves. For a particular value $R_{0}$ of $R$, the two curves merge at some real point 
and then, for $R<R_{0}$, the curve $E_{R}$ has a single component. When $E_R$ has a single component, $(K_R,W)$ is an approximation pair because, by the definition of $E_R$, $U^{\mu^{K,Q}}(z)+Q(z)$ is constant there so that the balayage of $\mu^{K,Q}$ onto $E_R$ satisfies the condition of Theorem \ref{pvmain}. We also remark that,  when $R<0$ decreases, the increase of the connected component of $K_{R}$ that contains $K$ can have a discontinuity, namely when two components of $K_{R}$ merge. This phenomenon does not occur with the classical (unweighted) Bernstein-Walsh theorem.

\begin{remark}  Let $z^{n}P_{n}$ be bounded on $[c,1]$, $1/4<c<1$. Then $z^{n}P_{n}\to0$ on $[0,c)$. 

To see this, by the Bernstein-Walsh inequality (\ref{ineq-BW}) for $K=[c,1]$ and $w(z)=|z|$, 
$$
|z^{n}P_{n}(z)|\leq\|z^{n}P_{n}\|_{[c,1]}e^{n(-U^{K,Q}(z)-Q(z)+F_{K,Q})}.$$
Now $U^{K,Q}(z)+Q(z)=F_{K,Q}$ on $[c,1]$ while 
$$U^{K,Q}(z)+Q(z)-F_{K,Q}>0,\quad z\in[0,c).
$$
Indeed, the support $S_w$ of the measure $\mu^{K,Q}$ is on $[c,1]$  (recall the Frostman inequalities (2.1) and (2.2)) while $U^{K,Q}(z)+Q(z)$ is strictly convex on $(0,c)$ and becomes large when $z\to0$.
\end{remark}

In a similar fashion, we observe the following.

\begin{remark} Let $D(a,r)=\{z\in \C: |z-a|<r\}$ be a Pritsker disk for $W(z)=z$, i.e., $a>0$ and $r<a/3$. Let $z^{n}P_{n}(z)\to f(z)$ uniformly in $D(a,r+c)$ for $c>0$ where $f$ is holomorphic. Then $f\equiv 0$.

To see this, on the boundary of $K:=\bar D(a,r+c/2)$ there must exist a point $b$ such that, for the weighted equilibrium problem with $Q=-\log |z|$ on 
$K$, one has $$U^{\mu^{K,Q}}(b)+Q(b)>F_{K,Q}$$ and, by continuity ($K$ is regular and $Q$ is continuous), the strict inequality is still true in a neighborhood $\VV_{b}$ of $b$, included in $D(a,r+c)$. By the Bernstein-Walsh inequality (\ref{ineq-BW}),
$$
|z^{n}P_{n}(z)|\leq\|z^{n}P_{n}\|_{\bar D(a,r+c/2)}e^{n(-U^{\mu^{K,Q}}(z)-Q(z)+F_{K,Q})}.
$$
The norms $\{\|z^{n}P_{n}\|_{\bar D(a,r+c/2)}\}$ are bounded, by convergence of $z^{n}P_{n}$ to $f$, and the exponential goes to $0$ for $z\in \VV_{b}$.
\end{remark}

\noindent This last remark shows that $(D(a,r),z)$ is an example of a {\it maximal} approximation pair, a notion we discuss in the next section.

\section{An analogue of the Szeg\H o curve and a related approximation pair}
In this section, we state some properties for an analogue of the classical Szeg{\H o} curve \cite{S}, where we replace the exponential function $e^{-z}$ with the rational function $1/(1+z)$. As we will see in Theorem \ref{scurve}, this special curve, denoted by $\LL_{+}$, consists of the right loop of the lemniscate of degree 2,
$$\LL=\LL_{+}\cup\LL_{-}=\{z\in\C,~|z(z+1)|=1/4\},$$ 
where $\LL_{+}$ (resp. $\LL_{-}$) denotes the loop around 0 (resp.\ around $-1$), see Fig.\ \ref{Lemn}. Let $G$ be the bounded domain with boundary $\LL_{+}$. 
\begin{figure}[htb]
\centering
\includegraphics[height=6cm]{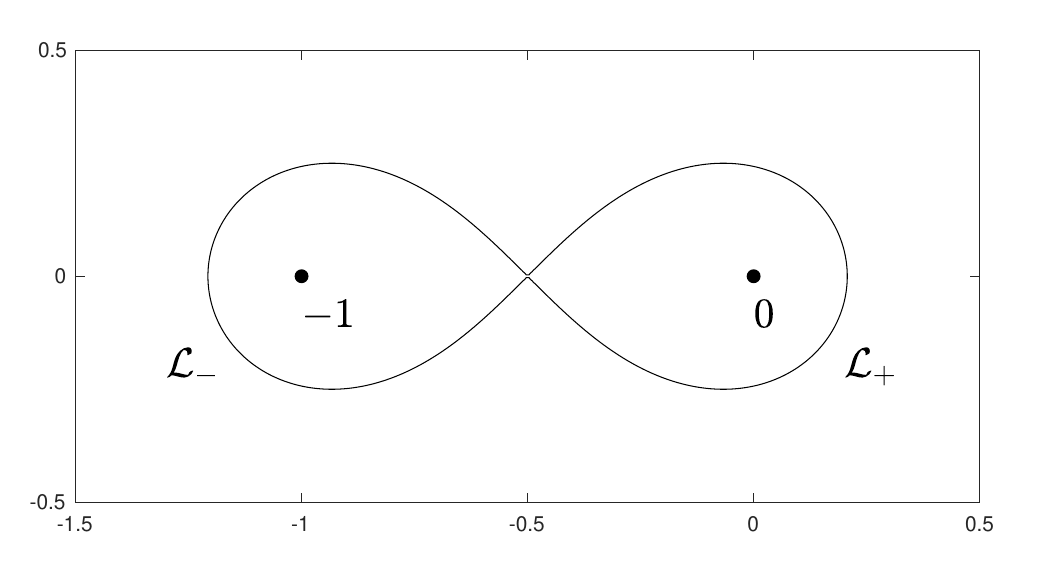}
\caption{The lemniscate $4|z(z+1)|=1$.}
\label{Lemn}
\end{figure}
We prove that the pair $(G,W)$ with $W(z)=1+z$ 
form a {\it maximal} approximation pair, see 4) of Theorem \ref{scurve}. For that, we will consider the weighted equilibrium measure $\mu^{\bar G,Q}$ with $Q(z)=-\log|1+z|$. Note that, for $z\in\LL_{+}$,
\begin{equation}\label{U+Q=4}
\log\frac{1}{|z|}+Q(z)=\log4\quad\iff\quad U^{\hat\delta_{0}}(z)+Q(z)=\log4,
\end{equation}
where $\hat\delta_{0}$ is the balayage of $\delta_{0}$ onto $\LL_{+}$. Hence, by the minimum principle for superharmonic functions, $U^{\hat\delta_{0}}(z)+Q(z)\geq\log4$, $z\in G$, which implies that $\mu^{\bar G,Q}=\hat\delta_{0}$ and $F_{\bar G,Q}=\log4$. Moreover, (\ref{U+Q=4}) implies that $U^{\hat\delta_{0}}$ is continuous as a function restricted to the support of $\hat\delta_{0}$, hence is continuous everywhere. Thus, by applying the maximum principle to the function $U^{\hat\delta_{0}}+Q$, harmonic in $G$, continuous in $\bar G$, we get that
\begin{equation}\label{eq-G-bar}
U^{\hat\delta_{0}}(z)+Q(z)=\log4, \qquad z\in \bar G.
\end{equation}
The following lemma is elementary but will be useful in the proof of our theorem. Since we have not found a reference in the literature, we give a proof.
\begin{lemma}\label{Lem-Tayl}
Let $f$ be a function holomorphic in the unit disk $D:=\{z:|z|<1\}$. Then Taylor's formula with integral remainder holds true in $D$, namely, for any $z\in D$, 
\begin{equation}\label{Taylor-rem}
f(z)=\sum_{k=0}^{n}\frac{f^{(k)}(0)}{k!}z^{k}+\frac{1}{n!}\int_\gamma f^{(n+1)}(t)(z-t)^{n}dt,
\end{equation}
where $\gamma$ is any path from 0 to $z$ in $D$.
\end{lemma}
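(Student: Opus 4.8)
The plan is to prove Taylor's formula with integral remainder for a function holomorphic on the unit disk by reducing it to the classical one-variable calculus statement applied along the chosen path $\gamma$. First I would fix $z\in D$ and a path $\gamma$ from $0$ to $z$; since $D$ is open and connected, such a path exists, and by a standard homotopy/uniform-continuity argument I may take $\gamma$ to be a smooth (say $C^1$) path lying in a compact subset $L\subset D$ (for instance, replace an arbitrary path by a polygonal one, which can be smoothed at the corners). On the compact set $L$ all derivatives $f^{(k)}$ are bounded and continuous, so every integral in sight is well defined and Cauchy's theorem guarantees the value of $\int_\gamma f^{(n+1)}(t)(z-t)^n\,dt$ is independent of the choice of $\gamma$ within its homotopy class in $D$; since $D$ is simply connected, it depends only on the endpoints.

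The computational heart is an integration-by-parts identity. Define, for $0\le k\le n$,
$$
R_k:=\frac{1}{k!}\int_\gamma f^{(k+1)}(t)(z-t)^k\,dt.
$$
Parametrizing $\gamma$ by $t=\gamma(s)$, $s\in[0,1]$, the integrand is $\frac{d}{ds}$ of an explicit primitive: indeed
$$
\frac{d}{ds}\!\left[-\frac{f^{(k)}(\gamma(s))}{k!}(z-\gamma(s))^k\right]
=\frac{f^{(k+1)}(\gamma(s))}{k!}(z-\gamma(s))^k\gamma'(s)\,-\,\frac{f^{(k)}(\gamma(s))}{(k-1)!}(z-\gamma(s))^{k-1}\gamma'(s).
$$
Integrating from $s=0$ to $s=1$ (where $\gamma(0)=0$, $\gamma(1)=z$, so the $(z-\gamma)^k$ term vanishes at the upper limit for $k\ge 1$) yields the recursion $R_k=\frac{f^{(k)}(0)}{k!}z^k+R_{k-1}$ for $k\ge 1$, together with the base case $R_0=\int_\gamma f'(t)\,dt=f(z)-f(0)$, which is just the fundamental theorem of calculus for the holomorphic primitive along $\gamma$. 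Telescoping from $k=n$ down to $k=0$ gives exactly $f(z)=\sum_{k=0}^n\frac{f^{(k)}(0)}{k!}z^k+R_n$, which is \eqref{Taylor-rem}.

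I expect the only real subtlety — rather than an obstacle — to be the preliminary reduction that legitimizes the integration by parts: one must ensure the path can be taken smooth and confined to a compact subset of $D$ so that $f^{(n+1)}$ is bounded and the primitive above is genuinely differentiable, and one must invoke simple connectedness (or Cauchy's theorem) to see that the resulting value does not depend on which admissible $\gamma$ is used. Both points are standard; once they are in place, the telescoping computation is routine and purely formal. An alternative, essentially equivalent, route would be to start from the Cauchy integral formula on a circle $|t|=r$ with $|z|<r<1$, expand $\frac{1}{t-z}$ to order $n$ with the explicit finite geometric-series remainder, and recognize the remainder integral; deforming that circular contour to $\gamma$ (again by Cauchy) recovers the stated path-integral form. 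I would present the integration-by-parts version as the cleaner argument and remark that the path independence of the remainder follows from Cauchy's theorem.
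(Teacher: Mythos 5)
Your argument is correct in strategy but takes a genuinely different route from the paper: the paper first proves (\ref{Taylor-rem}) for the straight segment $\gamma=[0,z]$ by applying the ordinary real-variable Taylor formula to $s\in[0,1]\mapsto f(sz)$ and changing variables $t=sz$ (quoting Whittaker--Watson for the one-dimensional statement), and then passes to an arbitrary path by Cauchy's theorem, since $t\mapsto f^{(n+1)}(t)(z-t)^{n}$ is holomorphic in the simply connected disk. You instead work directly on a general (smoothed) path and build the formula by iterated integration by parts, with path independence again supplied by Cauchy; this avoids quoting the real-variable theorem and makes the telescoping structure explicit, at the cost of a slightly longer computation. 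One sign slip must be repaired before the computation closes: your displayed derivative identity has both terms with the wrong sign. Correctly,
\begin{equation*}
\frac{d}{ds}\Bigl[-\frac{f^{(k)}(\gamma(s))}{k!}\,(z-\gamma(s))^{k}\Bigr]
=-\frac{f^{(k+1)}(\gamma(s))}{k!}\,(z-\gamma(s))^{k}\,\gamma'(s)
+\frac{f^{(k)}(\gamma(s))}{(k-1)!}\,(z-\gamma(s))^{k-1}\,\gamma'(s),
\end{equation*}
and integrating over $[0,1]$ (the boundary term at $s=1$ vanishing for $k\geq1$) gives $R_{k-1}=\frac{f^{(k)}(0)}{k!}z^{k}+R_{k}$, not the recursion $R_{k}=\frac{f^{(k)}(0)}{k!}z^{k}+R_{k-1}$ that you state; as written, your recursion would telescope to $f(z)=f(0)-\sum_{k=1}^{n}\frac{f^{(k)}(0)}{k!}z^{k}+R_{n}$, with the wrong sign on the Taylor sum. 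With the corrected recursion and the base case $R_{0}=f(z)-f(0)$, summing over $k=1,\dots,n$ yields exactly (\ref{Taylor-rem}), so after this sign correction your proof is complete and equivalent in content to the paper's.
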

\begin{proof}
First we note that (\ref{Taylor-rem}) holds true with $\gamma=[0,z]$. Indeed, it suffices to apply the usual Taylor's formula to the function of a real variable $s\in[0,1]\mapsto f(sz)$ and to make the change of variable $t=sz$, see also \cite[\S 7.1 p.125]{WW}. Now, the segment $[0,z]$ can be deformed to any contour from $0$ to $z$ in $D$ and the formula still holds since  each function $t\mapsto f^{(n+1)}(t)(z-t)^{n}$ is holomorphic. \end{proof}

Finally, for a compact set $K$, and a function $f$ defined on $K$, we define the essential sup of $f$ on $K$,
$$
\|f\|_{K}^{*}=\inf\{L:~|f(z)|\leq L~\text{ q.e.\ on }K\}. 
$$

Let $s_{n}((1+z)^{-n})$ denote the $n-th$ order Taylor polynomial about $0$ of the function $(1+z)^{-n}$.
\begin{theorem}\label{scurve}
The following hold: \\
1) As $n\to \infty$,
\begin{equation}\label{estim-diff}
1-(1+z)^{n}s_{n}((1+z)^{-n})=\frac{(-1)^{n+1}z}{\sqrt{n\pi}(2z+1)}(4z(1+z))^{n}
\left(1+\OO\left(\frac1n\right)\right),\quad z\in\bar G\setminus\{-1/2\}.
\end{equation}
2) Let $\tilde s_{n}((1+z)^{-n})$ be the constant multiple of $s_{n}((1+z)^{-n})$ normalized so that its leading coefficient is 1.
Then, the sequence of monic polynomials $\tilde s_{n}((1+z)^{-n})$ is asymptotically extremal for $\bar G$ (cf., \cite{MS}) and the weight $w(z)=|W(z)|=|1+z|$, that is
\begin{equation}\label{estim-extr}
\lim_{n\to\infty}\left(\|(1+z)^{n}\tilde s_{n}((1+z)^{-n}\right)\|_{\bar G}^{*})^{1/n}=\exp(-F_{\bar G,Q})=\frac14.
\end{equation}
3) 
Let $\nu_{n}=\frac{1}{n}\sum_{j=1}^n \delta_{z_{nj}}$ be the normalized measure associated to the zeros $\{z_{nj}\}$ of the polynomial $s_{n}((1+z)^{-n})$. Then
$$
\nu_{n}\to\mu^{\bar G,Q},\quad\text{ as } n\to\infty.
$$
4) The pair $(G,1+z)$ is an approximation pair which is maximal, in the sense that for any domain
$H$ containing $\bar G$, $(H,1+z)$ is not an approximation pair.
\end{theorem}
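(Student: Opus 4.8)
The plan is to treat the four parts in turn, using part~1) as the engine for parts~2) and 3) and a separate potential--theoretic argument for part~4). For part~1), I would start from the contour form of Taylor's remainder --- equivalently, Lemma~\ref{Lem-Tayl} applied to $g(z)=(1+z)^{-n}$, holomorphic in the unit disk: for $|z|<r<1$,
\[
1-(1+z)^{n}s_{n}\bigl((1+z)^{-n}\bigr)(z)=(1+z)^{n}\bigl(g(z)-s_{n}(g)(z)\bigr)
=\frac{z\,\bigl(z(1+z)\bigr)^{n}}{2\pi i}\oint_{|w|=r}\frac{dw}{\bigl(w(1+w)\bigr)^{n}\,w\,(w-z)}.
\]
The map $w\mapsto w(1+w)$ has the single critical point $w=-1/2$, with critical value $-1/4$, and the level set $\{\,|w(1+w)|=1/4\,\}$ is precisely $\LL=\LL_{+}\cup\LL_{-}$; thus $\LL_{+}$ is the exact analogue of Szeg\H o's curve here. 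I would then deform $|w|=r$ outward (keeping $0$ and $z$ inside and $-1$ outside) to a steepest--descent contour through $-1/2$ and apply the classical saddle--point method: the Gaussian through the saddle produces the factor $1/\sqrt{n\pi}$, the saddle value $\bigl(w(1+w)\bigr)^{-n}\big|_{w=-1/2}=(-4)^{n}$ combines with the prefactor to give $\bigl(4z(1+z)\bigr)^{n}$, and $1/(w-z)$ evaluated at $w=-1/2$ produces the factor $1/(2z+1)$; tracking orientation yields the sign $(-1)^{n+1}$ and hence (\ref{estim-diff}), uniformly on compact subsets of $\bar G\setminus\{-1/2\}$. At $z=-1/2$ the saddle collides with the pole, so a separate (confluent) estimate is needed there.

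For part~2), the leading coefficient of $s_{n}\bigl((1+z)^{-n}\bigr)$ is $g^{(n)}(0)/n!=(-1)^{n}\binom{2n-1}{n}\sim(-1)^{n}4^{n}/(2\sqrt{\pi n})$, so $\tilde s_{n}=s_{n}/\bigl((-1)^{n}\binom{2n-1}{n}\bigr)$. Since $|4z(1+z)|\le1$ on $\bar G$, part~1) (together with a routine local bound near $-1/2$, as in \cite{PV3}) gives $\limsup_{n}\|(1+z)^{n}s_{n}\|_{\bar G}^{1/n}\le1$, hence $\limsup_{n}\bigl(\|(1+z)^{n}\tilde s_{n}\|_{\bar G}^{*}\bigr)^{1/n}\le1/4=e^{-F_{\bar G,Q}}$; the matching lower bound is the inequality $\|w^{n}P_{n}\|_{\bar G}^{*}\ge e^{-nF_{\bar G,Q}}$, valid for every monic $P_{n}\in\mathcal P_{n}$ and immediate from the definition of $V_{\bar G,Q}$ and its $\log|z|+F_{\bar G,Q}$ behavior at infinity. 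This proves (\ref{estim-extr}). For part~3), $\{\tilde s_{n}\}$ is then a sequence of monic asymptotically extremal polynomials for $(\bar G,|1+z|)$, and I would invoke the standard theorem on asymptotic zero distributions of such sequences (cf.\ \cite{MS}, \cite{ST}): once one checks the zeros do not escape --- from (\ref{estim-diff}), extended slightly past $\LL_{+}$, a zero forces $|4z(1+z)|=\bigl(\sqrt{n\pi}\,|2z+1|/|z|\bigr)^{1/n}(1+o(1))\to1$, so the zeros cluster on $\LL_{+}$ --- the normalized counting measures $\nu_{n}$ converge weakly to $\mu^{\bar G,Q}=\hat\delta_{0}$.

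For part~4), that $(G,1+z)$ is an approximation pair is immediate: by (\ref{eq-G-bar}) the probability measure $\hat\delta_{0}$, supported on $\partial G=\LL_{+}$, satisfies $U^{\hat\delta_{0}}(z)+Q(z)=\log4$ for all $z\in\bar G\supset G$, so Theorem~\ref{pvmain} applies with $\mu=\hat\delta_{0}$ and $F=\log4$. For maximality I would argue by contradiction. Let $H$ be a domain with $\bar G\subset H$ (necessarily $-1\notin H$, else $1+z$ is not a non-vanishing holomorphic weight on $H$) and suppose $(H,1+z)$ has the approximation property. Since $\bar G$ is a closed Jordan domain it is regular, $\mu^{\bar G,Q}=\hat\delta_{0}$ and $F_{\bar G,Q}=\log4$, so Lemma~\ref{lem-G-K} (with domain $H$ and $K=\bar G$) gives
\[
U^{\hat\delta_{0}}(z)+Q(z)<\log4,\qquad z\in\bar H\setminus\bar G .
\]
On the other hand, balayage of $\delta_{0}$ out of $G$ does not change the logarithmic potential off $G$, so $U^{\hat\delta_{0}}(z)=-\log|z|$ for $z\in\C\setminus G$ and hence $U^{\hat\delta_{0}}(z)+Q(z)=-\log|z(1+z)|$ there; in particular this exceeds $\log4$ throughout the interior of the companion loop $\LL_{-}$ (where $|z(1+z)|<1/4$, by the minimum principle for the superharmonic function $-\log|z(1+z)|$, which equals $\log4$ on $\LL_{-}$ and blows up at $-1$). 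Since $-1/2\in\LL_{+}\subset\bar G\subset H$ and $H$ is open, the point $z^{*}=-1/2-\delta$ lies in $H\setminus\bar G$ for all small $\delta>0$, and there $U^{\hat\delta_{0}}(z^{*})+Q(z^{*})=-\log\bigl(\tfrac14-\delta^{2}\bigr)>\log4$, contradicting the displayed inequality.

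I expect the technical core to be the saddle--point estimate in part~1): obtaining (\ref{estim-diff}) with uniform $\OO(1/n)$ up to (and a suitable separate statement at) the critical point $-1/2$, and then, in part~3), ruling out zeros escaping. Part~4), by contrast, is short once part~1) / the preliminary discussion is in hand; its one real idea is that off $\bar G$ the weighted potential equals $-\log|z(1+z)|$ and so already exceeds its equilibrium value $\log4$ on the entire interior of $\LL_{-}$, hence the strict inequality of Lemma~\ref{lem-G-K} cannot survive any enlargement of $\bar G$.
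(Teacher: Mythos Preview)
Your proposal is correct, and parts 2) and 3) mirror the paper's arguments closely (same leading-coefficient computation, same appeal to \cite{MS}). Parts 1) and 4), however, follow genuinely different routes.

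For part 1), you represent the remainder by the closed Cauchy contour integral and evaluate it by a true saddle-point at $w=-1/2$, the critical point of $w\mapsto w(1+w)$. The paper instead uses Lemma~\ref{Lem-Tayl} (the path integral form of Taylor's remainder from $0$ to $z$), writing the integrand as $h(t)g(t)^{n}$ with $g(t)=(z-t)/(1+t)^{2}$ and $h(t)=1/(1+t)$. The critical point of $g$ is at $t_{0}=2z+1$ (not $-1/2$), but for $z\in\bar G\setminus\{-1/2\}$ one can choose the path $\gamma$ so that $|g|$ attains its maximum only at the \emph{endpoint} $t=0$; the asymptotics then come from the elementary endpoint formula $-h(0)g(0)^{n+1}/(ng'(0))=z^{n+1}/(n(2z+1))$ and not from the saddle. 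Your route is valid too: the vertical line through $-1/2$, closed to the right, encloses $0$ and $z$ but not $-1$, and since $|w(1+w)|=t^{2}+1/4$ on that line the saddle dominates and delivers the same expression (including the $1/\sqrt{n\pi}$ and $1/(2z+1)$ factors). The paper's endpoint argument is slightly more elementary (no global contour manipulation, just a good path), while your saddle picture makes the role of $\LL_{+}$ as the critical level set transparent.

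For part 4), the paper argues directly from the weighted Bernstein--Walsh inequality (\ref{ineq-BW}): since $U^{\hat\delta_{0}}+Q=-\log|z(1+z)|>\log4$ inside $\LL_{-}$, any sequence $(1+z)^{n}P_{n}$ uniformly bounded on $\bar G$ must tend to $0$ there, so the only function holomorphic on a neighborhood of $\bar G$ approximable on $\bar G$ is $0$. Your contradiction via Lemma~\ref{lem-G-K} (applied to a hypothetical approximation pair $(H,1+z)$ with $K=\bar G$) is a clean alternative that rests on the same potential computation; it is slightly more indirect but equally short.
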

\begin{figure}[htb]
\centering
\includegraphics[height=6cm]{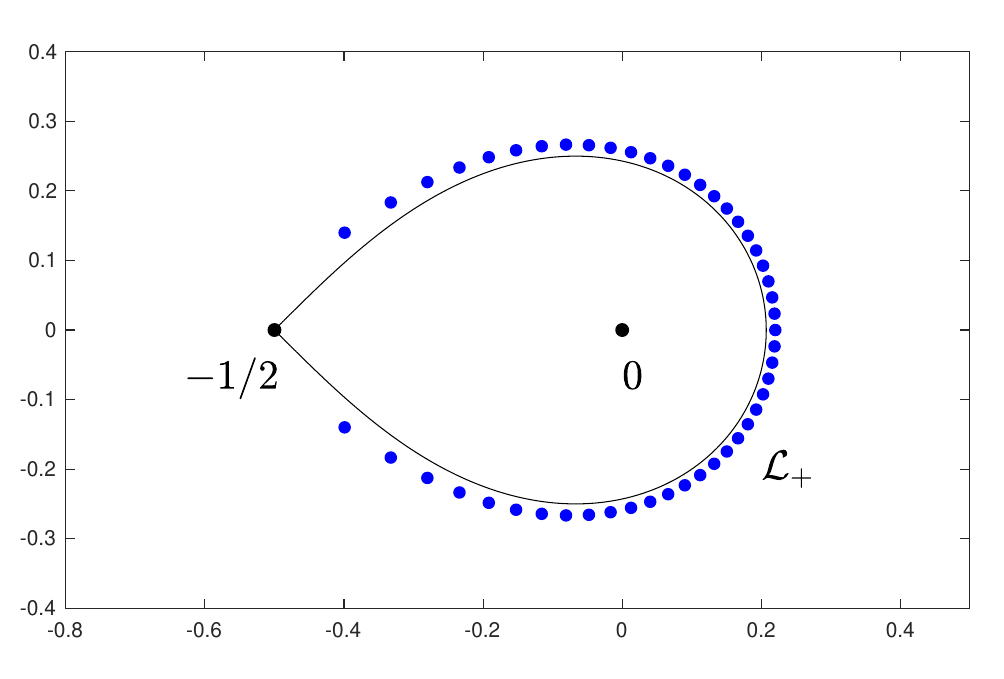}
\caption{The 50 zeros of $s_{50}((1+z)^{-50})$, and the right loop $\LL_{+}$ of the lemniscate $\LL$, an analogue of the Szeg\"o curve for the function $1/(1+z)$.}
\label{Zeros}
\end{figure}
\begin{proof}
1) 
 : By making use of Lemma \ref{Lem-Tayl}, one has, for $z\in \bar G\setminus\{-1/2\}$, 
\begin{align}
(1+z)^{-n}-s_{n}((1+z)^{-n}) & =(-1)^{n+1}n
\frac{(2n)!}{(n!)^{2}}\int_{\gamma}\frac{(z-t)^{n}}{(1+t)^{2n+1}}dt \label{Taylor}
\\[5pt]
& = (-1)^{n+1}\frac{\sqrt{n}}{\sqrt{\pi}}4^{n}\int_{\gamma}\frac{(z-t)^{n}}{(1+t)^{2n+1}}dt
\left(1+\OO\left(\frac1n\right)\right), \notag
\end{align}
where $\gamma$ is any path from 0 to $z$ avoiding $-1$. To get an estimate for the integral, we use the steepest descent method, see \cite{D}, \cite[Chapter 4]{O}. The integrand can be written in the form
${h(t)g(t)^{n}}
$, where
$$
h(t)=\frac{1}{1+t},\qquad g(t)=\frac{(z-t)}{(1+t)^{2}}.
$$
\begin{figure}[htb]
\centering
\includegraphics[height=9cm]{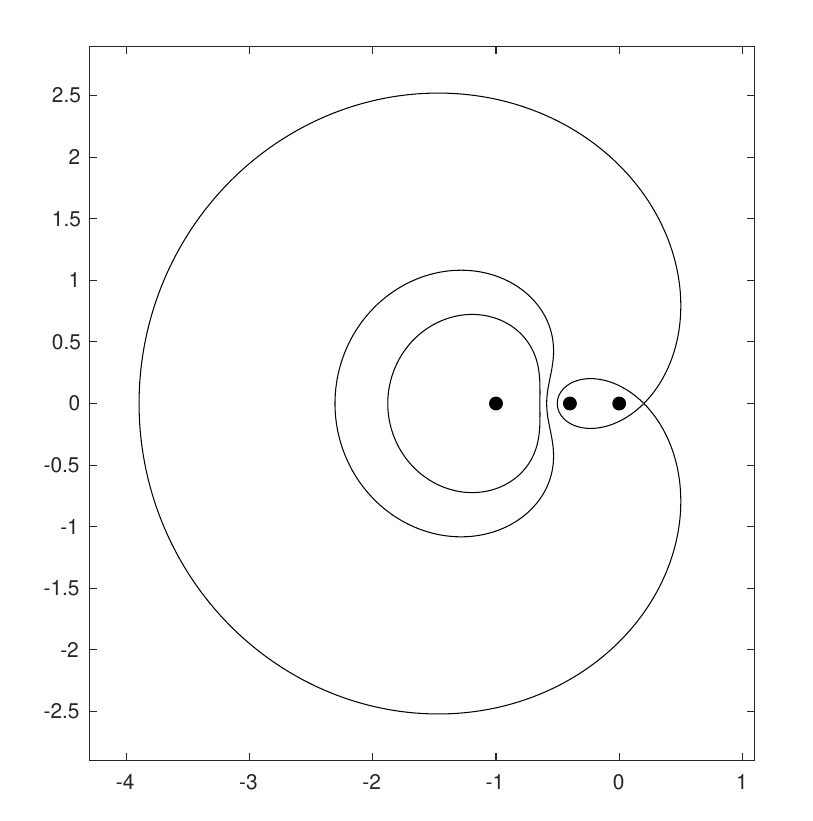}
\caption{Three level lines of $g(t)$ when $z=-0.4$, and the points $-1,z,0$. The level line 
$\CC_{1}\cup\CC_{2}$ with the inner loop $\CC_{1}$ is the one passing through the critical point $t_{0}=2z+1=0.2$.}
\label{Lev-lines}
\end{figure}
The function $g$ has a unique critical point at $t_{0}=2z+1$ with critical value $-1/(4(z+1))$. 
Figure \ref{Lev-lines} depicts a few level lines of $g(t)$ when $z=-0.4$. Independently of the choice of $z$, the critical level line passing through $t_{0}$ is made of two loops $\CC_{1}\cup\CC_{2}$ intersecting at $t_{0}$, the loop $\CC_{1}$ lying in the interior of $\CC_{2}$. We denote by $A_{0}$ the domain enclosed by the inner loop $\CC_{1}$, $A_{1}$ the domain exterior to the large loop $\CC_{2}$, and $A_{2}$ the remaining domain lying outside $\CC_{1}$ and inside $\CC_{2}$. 
Note that $-1\in A_{2}$, and, by Rouch\'e's theorem, $z\in A_{0}$. Moreover, the point 0 belongs to $A_{0}\cup A_{1}$ if and only if $4|z(z+1)|<1$, that is $z$ lies inside 
the lemniscate $\LL$ (depicted in Figure \ref{Lemn}). Note that
$0\in A_{0}$ if $z\in\LL_{+}$ and $0\in A_{1}$ if $z\in\LL_{-}$. Note also that when $z=-1/2$ then $t_{0}=0$ is the critical point of $g$.

If $z$ lies inside of $\LL_{+}$ or on $\LL_{+}\setminus\{-1/2\}$, then $0\in A_{1}\setminus\{t_{0}\}$, and it is clear that there is a path from 0 to $z$ such that the maximum of $|g(t)|$ is attained at 0 and only at this point, with $g'(0)\neq0$. Thus, the steepest descent method gives the estimate 
$$
\int_{0}^{z}\frac{(z-t)^{n}}{(1+t)^{2n+1}}dt=-
\frac{h(0)g(0)^{n+1}}{ng'(0)}\left(1+\OO\left(\frac1n\right)\right)
=\frac{z^{n+1}}{n(2z+1)}\left(1+\OO\left(\frac1n\right)\right)
$$
which implies (\ref{estim-diff}):
$$
1-(1+z)^{n}s_{n}((1+z)^{-n})=\frac{(-1)^{n+1}z}{\sqrt{n\pi}(2z+1)}(4z(1+z))^{n}
\left(1+\OO\left(\frac1n\right)\right),\quad  z\in\bar G\setminus\{-1/2\}.
$$
This will not be needed in the sequel, but we may notice that this estimate is still valid when $z$ lies outside of $\LL_{-}\cup\LL_{+}$ (then $0$ lies in $A_{2}$). If $z$ lies inside $\LL_{-}$ then $0\in A_{1}$ and the main contribution to the integral comes from the saddle point $t_{0}$.
\\
2) We first note that the leading coefficient of $s_{n}((1+z)^{-n})$ is
$$
\frac12\frac{(2n)!}{(n!)^{2}};
$$
Stirling's formula yields that its $n$-th root limit equals $4$. 
To prove (\ref{estim-extr}), we use (\ref{estim-diff}) where we notice that the one point set $\{-1/2\}$ has capacity 0. We get, for $z\in\LL_{+}\setminus\{-1/2\}$ and some $\theta\in[0,2\pi)$,
\begin{align}
(1+z)^{n}s_{n}((1+z)^{-n})\sim1+\frac{(-1)^{n+1}ze^{i\theta}}{\sqrt{n\pi}(2z+1)},
\end{align}
whose $n$-th root modulus tends to 1 as $n\to \infty$.\\
3) The estimate (\ref{estim-diff}) shows that the support of any limit measure of the sequence $\{\nu_{n}\}$ does not intersect the domain $G$. Hence, it suffices to apply \cite[Theorem 2.3(b)]{MS}.
\\
4) The fact that $(G,1+z)$ is an approximation pair is just a consequence of Theorem \ref{pvmain} and equation (\ref{eq-G-bar}). To prove that it is a maximal pair, first note that for $z$ outside of $\LL_{+}$,
$$
U^{\hat\delta_{0}}(z)+Q(z)=U^{\delta_{0}}(z)+Q(z) =-\log|z(z+1)|,
$$
and, thus, for $z$ inside $\LL_{-}$, it holds that $U^{\hat\delta_{0}}(z)+Q(z)>\log4$.
Together with the Bernstein-Walsh inequality (\ref{ineq-BW}), for $P_n \in {\mathcal P}_n$,
$$
|(1+z)^{n}P_{n}(z)|\leq\|(1+z)^{n}P_{n}\|_{\bar G}\exp(n(-U^{\hat\delta_{0}}(z)-Q(z)+\log4)),
\qquad z\in\C,
$$
we see that if a sequence $\{(1+z)^nP_{n}(z)\}$ is uniformly bounded in $\bar G$, then it has to decrease to 0 in $\LL_{-}$. Hence the only function holomorphic in a neighborhood of $\bar G$  which can be approximated uniformly on $\bar G$ by a weighted sequence $\{(1+z)^{n}P_{n}(z)\}$ is the zero function.
\end{proof}
\begin{remark} More generally, one can apply similar reasoning in replacing $1+z$ by the function $(1+z)^{\alpha}$ for $\alpha >0$. \end{remark}

Similarly to the case of the classical Szeg\H o curve, see \cite[Theorem 3.2]{PV3}, the property that $(G,1+z)$ is an approximation pair can be extended in the following way.
\begin{theorem}
1) Let $f$ be a function holomorphic in $G$, continuous in $\bar G\setminus\{-1/2\}$. Then, for any compact subset $K$ of $\bar G\setminus\{-1/2\}$, there exists a sequence of polynomials $\{P_{n}\}$, $\deg P_{n}\leq n$, such that
\begin{equation}\label{approx-K}
\|f(z)-(1+z)^{n}P_{n}(z)\|_{K}\to0\quad\text{as}\quad n\to\infty.
\end{equation}
2) Let $f$ be a function holomorphic in $G$, continuous in $\bar G$ with $f(-1/2)=0$. Then 
there exists a sequence of polynomials $\{P_{n}\}$, $\deg P_{n}\leq n$, such that
\begin{equation}\label{approx-K-2}
\|f(z)-(1+z)^{n}P_{n}(z)\|_{\bar G}\to0\quad\text{as}\quad n\to\infty.
\end{equation}
\end{theorem}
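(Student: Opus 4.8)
The plan is to construct the $P_n$ explicitly as truncated Taylor coefficients, in the spirit of Theorem~\ref{scurve}, after reducing to polynomial $f$ by Mergelyan's theorem. For the reduction in 1), given a compact $K\subset\bar G\setminus\{-1/2\}$ and $\eps>0$, choose $\delta>0$ with $K\cap\overline{D(-1/2,\delta)}=\emptyset$; the compact set $L:=\bar G\setminus D(-1/2,\delta/2)$ contains $K$, has connected complement (the disc $D(-1/2,\delta/2)$ meets both $G$ and $\C\setminus\bar G$, so deleting it from the Jordan domain $\bar G$ opens it up), and $f|_{L}\in A(L)$, so Mergelyan's theorem gives a polynomial $q$ with $\|f-q\|_{L}<\eps$, hence $\|f-q\|_{K}<\eps$. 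For 2), $\bar G$ being a Jordan domain, Mergelyan gives a polynomial $\tilde q$ with $\|f-\tilde q\|_{\bar G}<\eps$, and then $q:=\tilde q-\tilde q(-1/2)$ satisfies $q(-1/2)=0$ and $\|f-q\|_{\bar G}<2\eps$ (since $|\tilde q(-1/2)|=|\tilde q(-1/2)-f(-1/2)|\le\eps$). A standard diagonal argument over a sequence $\eps\to0$ reduces 1) (resp.\ 2)) to the claim: for every polynomial $q$ (resp.\ every polynomial $q$ with $q(-1/2)=0$) there are $P_n\in\mathcal P_n$ with $(1+z)^{n}P_n\to q$ uniformly on $K$ (resp.\ on $\bar G$).

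For 1), take $P_n:=s_n\bigl(q(z)(1+z)^{-n}\bigr)$, the degree-$n$ Taylor polynomial about $0$ of $q(z)(1+z)^{-n}$, which is holomorphic on $\C\setminus\{-1\}\supset\bar G$. By Lemma~\ref{Lem-Tayl}, applied in the simply connected domain $\C\setminus(-\infty,-1]$ containing $\bar G$,
$$
q(z)-(1+z)^{n}P_n(z)=\frac{(1+z)^{n}}{n!}\int_{\gamma}\frac{d^{n+1}}{dt^{n+1}}\!\left[q(t)(1+t)^{-n}\right](z-t)^{n}\,dt.
$$
A Leibniz expansion gives $\frac{1}{n!}\frac{d^{n+1}}{dt^{n+1}}[q(t)(1+t)^{-n}]=(-1)^{n+1}n\binom{2n}{n}(1+t)^{-2n-1}q_n(t)$, with $q_n$ a polynomial of degree $\le\deg q$ and $q_n(0)\to q(-1/2)$; since $n\binom{2n}{n}\sim4^{n}\sqrt{n/\pi}$, the right side is, up to $1+\OO(1/n)$, the integral $\int_\gamma q_n(t)(z-t)^{n}(1+t)^{-2n-1}\,dt$ treated in the proof of Theorem~\ref{scurve}~1), carrying the harmless extra factor $q_n(t)$. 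The same steepest descent then applies: for $z\in\bar G\setminus\{-1/2\}$ the saddle $t_0=2z+1$ of $\psi(t)=(z-t)/(1+t)^{2}$ lies off the endpoint $0\in A_1$, so $\gamma$ can be chosen with $|\psi|$ maximal along it only at $0$, and with $\psi(0)=z$, $\psi'(0)=-(2z+1)$ one obtains
$$
q(z)-(1+z)^{n}P_n(z)=(-1)^{n+1}\,\frac{q(-1/2)}{\sqrt{\pi n}}\,\frac{z}{2z+1}\,\bigl(4z(1+z)\bigr)^{n}\bigl(1+o(1)\bigr),\qquad z\in\bar G\setminus\{-1/2\},
$$
the $o(1)$ uniform on $K$. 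As $|4z(1+z)|\le1$ on $\bar G$ and $z/(2z+1)$ is bounded on $K$, the right side tends to $0$ uniformly on $K$.

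For 2), $q(-1/2)=0$ kills the leading term above, and going to the next order is obstructed near $z=-1/2$ by the coalescence of $t_0=2z+1$ with the endpoint $0$. I would instead build the vanishing into $P_n$: writing $q(z)=(2z+1)r(z)$, set $P_n(z):=(2z+1)\,s_{n-1}\bigl(r(z)(1+z)^{-n}\bigr)\in\mathcal P_n$, so that $q(z)-(1+z)^{n}P_n(z)=(2z+1)\bigl[r(z)-(1+z)^{n}s_{n-1}\bigl(r(z)(1+z)^{-n}\bigr)\bigr]$. The computation of 1), now with the remainder of the degree-$(n-1)$ Taylor polynomial of $r(z)(1+z)^{-n}$, gives
$$
q(z)-(1+z)^{n}P_n(z)=(-1)^{n}\,\frac{r(-1/2)}{2\sqrt{\pi n}}\,\bigl(4z(1+z)\bigr)^{n}\bigl(1+o(1)\bigr),
$$
the explicit factor $2z+1$ having absorbed the $1/(2z+1)$ produced by $\psi'(0)$. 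Since $|4z(1+z)|\le1$ on all of $\bar G$, the right side is $\OO(n^{-1/2})$ uniformly on $\bar G$ once the error term is controlled uniformly: away from $-1/2$ by the standard steepest descent (the $2z+1$ cancellation making the bound independent of the distance to $-1/2$), and near $-1/2$ by the uniform asymptotics valid through the coalescence of $t_0$ with $0$, in which the factor $2z+1$ again renders the contribution $\OO(|2z+1|)$, hence uniformly small. Handling this coalescence regime --- analytically, the corner of $\LL_+$ at $-1/2$ --- is the only delicate step, and it is exactly what forces the exclusion of $-1/2$ in 1) and the hypothesis $f(-1/2)=0$ in 2).
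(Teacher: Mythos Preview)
Your approach differs substantially from the paper's. The paper argues \emph{algebraically}: from the single asymptotic (\ref{estim-diff}) it first shows that the constant $1$ is approximable on any compact $K\subset\bar G\setminus\{-1/2\}$, then observes that the class of approximable functions is closed under products via the identity
\[
fg-W^{2n}P_nQ_n=g(f-W^nP_n)+f(g-W^nQ_n)-(f-W^nP_n)(g-W^nQ_n),
\]
hence contains all monomials and so all polynomials, and concludes 1) by Mergelyan. For 2) it multiplies (\ref{estim-diff}) by $(1+z)(2z+1)$ to obtain that the single function $(1+z)(z+1/2)$ is approximable on all of $\bar G$, builds up to every $(1+z)(z+1/2)P(z)$, and then divides a general $f$ (first reduced, via Mergelyan, to one holomorphic at $-1/2$) by $(1+z)(z+1/2)$. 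Your route instead reduces to polynomial $q$ first and \emph{re-runs} the steepest descent of Theorem~\ref{scurve} with the extra factor $q_n(t)$ produced by Leibniz. For part 1) this is correct (your computation $q_n(0)\to q(-1/2)$ checks out) and has the merit of exhibiting $P_n$ explicitly; the paper's route is shorter because the asymptotic analysis is performed only once and everything else is soft algebra.

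For part 2), however, you have not closed the argument. The endpoint expansion you invoke has successive correction terms involving powers of $1/(n\,g'(0))=1/(n(2z+1))$, so the relative $o(1)$ in your displayed formula for $q(z)-(1+z)^nP_n(z)$ is \emph{not} uniform as $z\to-1/2$; multiplying by $2z+1$ removes the visible pole in the leading term, but the next terms still carry factors $(2z+1)^{-k}$, $k\ge1$, after the multiplication. Your appeal to ``uniform asymptotics valid through the coalescence of $t_0$ with $0$'' is exactly the missing work: a transitional (e.g.\ Bleistein-type) expansion, or an independent uniform bound on $r(z)-(1+z)^n s_{n-1}(r(1+z)^{-n})$ over $\bar G$, is needed. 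The paper's algebraic route has the structural advantage that this delicate point arises only once, for the specific case $q\equiv1$, rather than for every polynomial $r$; in fairness, the paper's own passage from (\ref{estim-diff}) to ``$(1+z)(z+1/2)$ is approximable on $\bar G$'' also rests on this uniformity near $-1/2$ without making it explicit.
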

\begin{proof}
1) We may assume $K$ has connected complement. It follows from (\ref{estim-diff}) that the constant function 1 can be approximated as in (\ref{approx-K}). Multiplying (\ref{estim-diff}) by $W(z)=1+z$ shows that $W(z)=1+z$, and hence $z=1-W(z)$, can also be approximated in this fashion. Moreover if the functions $f,g$ are approximable by $W^{n}P_{n}, \ W^{n}Q_{n}$, then the product $fg$ is also approximable since
$$
fg-W^{2n}P_{n}Q_{n}=g(f-W^{n}P_{n})+f(g-W^{n}Q_{n})-(f-W^{n}P_{n})(g-W^{n}Q_{n}).
$$ 
Hence, all monomials $1,z,z^{2},...$ are approximable on $K$, and then, by Mergelyan's theorem, all functions $f$ satisfying the assumptions in 1).
\\
2) Multiplying (\ref{estim-diff}) by $(1+z)(2z+1)$, we see that $(1+z)(z+1/2)$ is approximable by weighted polynomials in $\bar G$. Hence the same is true for $(1+z)(z+1/2)(1+z)^{k}$, $k\geq0$. Thus, all polynomials $(1+z)(z+1/2)P_{n}(z)$ are approximable in $\bar G$. Now, in addition to the assumptions satisfied by $f$, we moreover assume that it is holomorphic at $-1/2$. Then the function $g(z)=f(z)/((1+z)(z+1/2))$ is holomorphic in $G$ and continuous in $\bar G$. Hence, by Mergelyan's theorem, it can be approximated by a sequence of polynomials $Q_{n}$ in $\bar G$, which implies that $f$ is approximable by the sequence $(1+z)(z+1/2)Q_{n}(z)$ in $\bar G$. Together with the previous assertion, it proves 2) when $f$ is holomorphic at $-1/2$. Finally, if $f$ is only supposed to be continuous at $-1/2$, again by Mergelyan's theorem, it can be approximated by a sequence of polynomials $Q_{n}$. Since $Q_{n}(-1/2)$ tends to $f(-1/2)=0$, the polynomials $Q_{n}$ can be replaced with $Q_{n}-Q_{n}(-1/2)$, 
which satisfy the assumptions of 2) and are holomorphic at $-1/2$. Hence, the first part of the proof can be applied.
\end{proof}

In analogy with the conjecture below Theorem 3.2 in \cite{PV3}, we conjecture that for $f$ holomorphic in $G$, continuous in $\bar G$ with $f(-1/2)\neq0$, weighted approximation is not possible.
\section{Several complex variables} In the general setting of a uniform algebra $\mathcal U$ on a compact Hausdorff space $X$, given a collection of elements 
$f_1,...,f_k\in \mathcal U$, let $[f_1,...,f_k|X]$ denote the smallest closed subalgebra of $\mathcal U$ containing the constants and $f_1,...,f_k$. If $[f_1,...,f_k|X]=\mathcal U$, we say that 
$f_1,...,f_k$ are a set of generators for $\mathcal U$. Thus, in the previous univariate setting, given $(G,W)$ with $W\in A(\bar G)$ and $W\not = 0$ on $\bar G$, with this notation we have 
$$[W,zW]=[W,zW|\bar G].$$
For $G\subset \C$ such that $\bar G$ has connected complement, $A(\bar G)=P(\bar G)$ by Mergelyan's theorem. Thus the question of whether $[W,zW|\bar G]=A(\bar G)$ reduces to the question of whether $W,zW$ are generators of $P(\bar G)$, and sufficient conditions are given in \cite[Proposition 2.5]{PV2}. We use this as motivation for a multivariate analogue. 

Given $K\subset \C^n$ compact, 
$$\hat K:=\{z \in \C^n: |p(z)|\leq ||p||_K \ \hbox{for all holomorphic polynomials} \ p\}$$
is the polynomial hull of $K$. If $\hat K =K$ we say $K$ is polynomially convex. The algebra $P(K)$ consists of uniform limits of holomorphic polynomials restricted to $K$. We begin by stating a generalization of Theorem 19.1 of \cite{W} (c.f., also \cite{SW}). The proof is essentially the same; for the reader's convenience, we include a sketch. 

\begin{theorem} \label{wermer} Let $G\subset \C^n$ be a bounded, pseudoconvex domain with the property that $\bar G$ is polynomially convex and 
$A(\bar G)=P(\bar G)$ (in particular, $G$ has the Mergelyan property). Let $f_1,...,f_k$ ($k \geq n$) be holomorphic in a neighborhood of $\bar G$ satisfy the following three properties:
\begin{enumerate}
\item $f_1,...,f_k$ separate points in $\bar G$;
\item the matrix $\bigl[ \frac{\partial f_i}{\partial z_j}\bigr]$ has rank $n$ at each point of $\bar G$;
\item for $F:=(f_1,...,f_k)$ the map from $\bar G$ to $\C^{k}$, the set $K:=F(\bar G)$ is polynomially convex.
\end{enumerate}
Then the functions $f_1,...,f_k$ restricted to $\bar G$ generate $A(\bar G)$.

\end{theorem}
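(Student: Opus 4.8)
The plan is to reduce the statement to the classical Oka–Weil theorem on $K = F(\bar G) \subset \C^k$ by lifting functions on $\bar G$ to functions on $K$ and back. The hypotheses have been arranged precisely so that $F: \bar G \to K$ is a homeomorphism that is moreover biholomorphic onto its (smooth, $n$-dimensional) image: property (1) gives injectivity, property (2) gives that $F$ is an immersion, so $F(\bar G)$ is locally a complex $n$-dimensional submanifold of a neighborhood in $\C^k$, and property (3) gives polynomial convexity of $K$. Let me write $A := [f_1,\dots,f_k \,|\, \bar G]$ for the closed subalgebra of $A(\bar G)$ generated by the constants and the $f_i$; I want to show $A = A(\bar G)$, and since $A(\bar G) = P(\bar G)$ by hypothesis it suffices to show every holomorphic polynomial $p(z_1,\dots,z_n)$, restricted to $\bar G$, lies in $A$.

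First I would set up the pullback. Since $F$ is a homeomorphism of $\bar G$ onto the compact set $K$, composition $g \mapsto g \circ F$ is an isometric isomorphism of $C(K)$ onto $C(\bar G)$; it carries the coordinate functions $w_j$ on $\C^k$ to the $f_j$, hence carries the polynomial algebra $\C[w_1,\dots,w_k]|_K$ onto the algebra generated by the $f_j$ over $\C$, and therefore carries $P(K)$ (the uniform closure) onto $A$. So the claim $A = A(\bar G)$ is equivalent to: every function in $A(\bar G)$ is of the form $g \circ F$ with $g \in P(K)$. By Oka–Weil, since $K$ is polynomially convex, $P(K) = A(K)$ is the full algebra of functions continuous on $K$ and holomorphic on $K$ — more precisely, one needs the statement that functions holomorphic on a neighborhood of $K$ are uniform limits of polynomials on $K$, which is exactly Oka–Weil for the compact polynomially convex set $K$. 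Thus the problem becomes: show that for each holomorphic polynomial $p$ on $\C^n$, the function $p \circ F^{-1}$ on $K$ extends to (or is a limit of restrictions of) functions holomorphic near $K$.

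The heart of the matter is therefore to produce, for a given coordinate $z_j$, a function holomorphic in a neighborhood of $K$ in $\C^k$ whose pullback under $F$ agrees with $z_j$ on $\bar G$ — equivalently, to show $F^{-1}: K \to \bar G$ extends holomorphically to a neighborhood of $K$. This is where I expect the main obstacle to be, and it is handled exactly as in Wermer's argument (Theorem 19.1 of \cite{W}): property (2) guarantees that near each point $w^0 = F(z^0) \in K$ one can, after permuting the $f_i$, solve the system $f_{i_1}(z) = w_{i_1}, \dots, f_{i_n}(z) = w_{i_n}$ by the implicit function theorem to get a local holomorphic inverse $z = \Phi_{w^0}(w)$ defined on a neighborhood of $w^0$ in $\C^k$ (extended trivially in the remaining $k-n$ variables), and these local inverses agree with $F^{-1}$ on $K$ and hence agree with each other on overlaps by the identity principle along $K$, since $K$ is locally a connected $n$-manifold through each of its points; patching them gives a holomorphic map $\Phi$ from a neighborhood $\Omega$ of $K$ to $\C^n$ with $\Phi|_K = F^{-1}$ and $\Phi \circ F = \mathrm{id}$ on $\bar G$. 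One technical point to verify carefully here, and the place where polynomial convexity (property (3)) and the homeomorphism property re-enter, is that the local pieces are mutually consistent and that $\Omega$ can be taken to be an actual open neighborhood of the compact set $K$ — this uses that $F$ is injective and proper onto $K$ so that points of $K$ near $w^0$ really are images of points of $\bar G$ near $z^0$.

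Once $\Phi$ is in hand the proof closes quickly: for any holomorphic polynomial $p$ on $\C^n$, the composite $p \circ \Phi$ is holomorphic on the neighborhood $\Omega$ of the polynomially convex compact set $K$, so by Oka–Weil it is a uniform limit on $K$ of holomorphic polynomials $q_m(w_1,\dots,w_k)$; pulling back by $F$, the functions $q_m(f_1,\dots,f_k) \in A$ converge uniformly on $\bar G$ to $p \circ \Phi \circ F = p$. Hence every $p|_{\bar G}$ lies in $A$, so $P(\bar G) \subset A \subset A(\bar G) = P(\bar G)$, giving $A = A(\bar G)$ and proving that $f_1,\dots,f_k$ generate $A(\bar G)$. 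I would present the neighborhood-and-patching step (the construction of $\Phi$) as the single substantive lemma and treat everything else as bookkeeping with the hypotheses.
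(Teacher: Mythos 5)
Your overall strategy --- pull everything back to $K=F(\bar G)$ and finish with Oka--Weil applied to $z_j\circ F^{-1}$ --- is a genuinely different route from the paper's. The paper never extends $F^{-1}$ off the image: it uses polynomial convexity of $K$ to manufacture finitely many functions $g_a=Q_a(F)$, small on $\bar G$ and large on a shell around it, embeds a neighborhood of $\bar G$ via $\Phi=(F,g_{a_1},\dots,g_{a_s})$ into the unit polydisk $\Delta\subset\C^{n+s}$ so that $V=\Phi(G_{3\epsilon/4})\cap\Delta$ is a closed analytic subvariety of $\Delta$, extends the functions $z_j\circ\Phi^{-1}$ from $V$ to $\Delta$ by the extension proposition on p.~132 of \cite{W}, and finishes with Taylor partial sums. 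Your plan would be fine if you actually had a map holomorphic on an open neighborhood of $K$ in $\C^k$ restricting to $F^{-1}$ on $K$; the entire weight of your proof rests on that construction, and your justification of it has a genuine gap.

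The gap is the patching step. When $k>n$, the set $K$ (and the image manifold $F(G_\epsilon)$ containing it) has positive codimension in $\C^k$, and two holomorphic functions on an open subset of $\C^k$ that agree on a positive-codimension submanifold need not agree on any open set: already for $k=2$, $n=1$ the functions $0$ and $w_2$ agree on $\{w_2=0\}$. So the identity principle ``along $K$'' gives nothing. Concretely, your local inverses are obtained by selecting $n$ of the $k$ coordinates and extending ``trivially in the remaining $k-n$ variables''; two different selections at nearby points produce extensions that agree on the image but genuinely differ off it, so the local pieces do not glue as claimed. Repairing this requires a substantive tool, not bookkeeping: e.g.\ a holomorphic retraction of a neighborhood of the image submanifold onto it (Docquier--Grauert), or an extension theorem for holomorphic functions from subvarieties (Cartan's Theorem B on a Stein neighborhood). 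That is exactly the role played in the paper by the polydisk embedding together with Wermer's extension proposition, and note that there polynomial convexity of $K$ is consumed in building the functions $g_a$ that make $V$ closed in $\Delta$, whereas you invoke it only at the final Oka--Weil stage. In the special case $k=n$ your shortcut is correct and in fact simpler than the paper's argument: then $F$ is a biholomorphism of a neighborhood $G_\epsilon$ of $\bar G$ onto the open set $F(G_\epsilon)\supset K$, so $F^{-1}$ itself is the needed extension and Oka--Weil finishes the proof. But the theorem allows $k>n$ (the paper writes its proof only for $k=n$, the polydisk argument extending to general $k$), and that is precisely the range your argument, as written, does not reach.
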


\begin{proof} For simplicity, we take $k=n$. Since $\bar G$ is bounded the $f_j$ are bounded there so we may assume $|f_j|\leq 1/2$ on $\bar G$. For $r>0$ let $G_r:=\{z\in \C^n: \hbox{dist}(z,\bar G) < r\}$. By hypothesis, for $\epsilon' >0$ sufficiently small $f_j$ are holomorphic in $G_{\epsilon'}$. From 1., there exists $\epsilon'' >0$ so that $F=(f_1,...,f_n)$ is one-to-one on $G_{\epsilon''}$. Then from 2., we can fix $\epsilon  \leq \min[\epsilon',\epsilon'']$ so that $\bigl[ \frac{\partial f_i}{\partial z_j}\bigr]$ has rank $n$ at each point of $G_{\epsilon}$ and hence for each $z^o\in G_{\epsilon}$, $F$ is a biholomorphic mapping on a neighborhood of $z^o$ in $G_{\epsilon}$. 

Given $a\in G_{\epsilon}\setminus \bar G$, using 3., we can find a polynomial $Q_a$ such that 
$$Q_a(F(a))> \max_{\bar G} |Q_a(F)|.$$
Define $g_a:= Q_a(F)$. Note that $g_a\in [f_1,...,f_n|\bar G]$ and we have $|g_a(a)|> ||g_a||_{\bar G}$. By rescaling, we can assume $|g_a(a)|> 1 >||g_a||_{\bar G}$ and then 
by replacing $g_a$ by a power $g_a^m$ we can assume 
$$|g_a(a)| >2 \ \hbox{and} \ ||g_a||_{\bar G} < 1/2.$$
Thus there exists a neighborhood $N_a$ of $a$ in $G_{\epsilon}$ on which $|g_a|>2$. We do this construction for each $a\in \bar G_{3\epsilon/4}\setminus G_{\epsilon/2}$; the corresponding neighborhoods $N_a$ cover this compact set. Take a finite subcover $N_{a_1},...,N_{a_s}$ and consider the corresponding functions 
$g_{a_1},...,g_{a_s}\in [f_1,...,f_n|\bar G]$. 

Define the mapping $\Phi: G_{3\epsilon/4}\to \C^{n+s}$ via 
$$\Phi(z):=(F(z),g_{a_1}(z),...,g_{a_s}(z))=(f_1(z),...,f_n(z),g_{a_1}(z),...,g_{a_s}(z)).$$
Since $F$ is one-to-one on $G_{3\epsilon/4}$, so is $\Phi$. Let $\Delta$ be the open unit polydisk in $ \C^{n+s}$ and define the non-empty set
$$V:= \Phi(G_{3\epsilon/4}) \cap \Delta.$$
Following, mutatis mutandis, the argument on p.\ 133 of \cite{W}, it follows that $V$ is an analytic subvariety of $\Delta$. (To see this, we first show that $V$ is relatively closed in $\Delta$. Take a sequence $\{p_n\}\subset V$ with $p_n\to p\in \Delta$. We claim $p_n=\Phi(z^{(n)})$ where $z^{(n)}\in G_{\epsilon/2}$. For if $z^{(n)}\in G_{3\epsilon/4}\setminus G_{\epsilon/2}$ then $z^{(n)}\in N_{a_j}$ for some $j\in \{1,...,s\}$. But then $|g_{a_j}(z^{(n)})|> 2$ contradicting $\Phi(z^{(n)})\in \Delta$. Thus $z^{(n)}\in G_{\epsilon/2}$ and hence there exists a limit point $z$ of $\{z^{(n)}\}$ with $z\in \bar G_{\epsilon/2}$. Without loss of generality, $z^{(n)}\to z$ and hence $\Phi(z^{(n)})\to \Phi(z)$; thus $p=\Phi(z)$ and $p\in V$. It remains to show $V$ is locally defined by holomorphic functions. For $p\in V$, we have $p=\Phi(z^o)$ for some $z^o\in  G_{3\epsilon/4}$. Since $F$ is biholomorphic in a neighborhood of $z^o$, each coordinate function $z_{\alpha}$ is a holomorphic function of $f_1,...,f_n$ and hence each component of $\Phi$ is a holomorphic function of $f_1,...,f_n$ in a neighborhood of $z^o$. Thus $V$ is defined locally near $p$ by holomorphic functions.)

Now for each $j=1,...,n$ define the function $Z_j$ on $V$ as 
$$Z_j(p):=z_j(\Phi^{-1}(p)).$$ 
These functions are holomorphic on $V$. A standard extension result allows us to find $H_j$ holomorphic in $\Delta$ with $H_j=Z_j$ on $V$ (e.g., the Proposition on p. 132 \cite{W}). Now 
$\Phi(\bar G)\subset \Phi(G_{3\epsilon/4})$ and $\Phi(\bar G)$ is compact. Since each $|f_k|, |g_{a_l}|\leq 1/2$ on $\bar G$, $\Phi(\bar G)\subset \Delta$ and 
$\Phi(\bar G)$ is compact in $V$. Thus if we fix $j\in \{1,...,n\}$, the Taylor series at the origin of $H_j$ in $\Delta$ converges uniformly on $\Phi(\bar G)$. Taking 
partial sums and returning to $\bar G$, we get a sequence of polynomials in the functions $f_1,...,f_n,g_{a_1},...,g_{a_s}$ which converge uniformly on $\bar G$ to 
$H_j(\Phi)$. But on $V$, $H_j=Z_j$ so  
$$H_j(\Phi)=Z_j(\Phi)=z_j, \ j=1,...,n \ \hbox{on} \ \bar G.$$
As observed earlier, each $g_{a_k} \in [f_1,...,f_n|\bar G]$ so we conclude that for $j=1,...,n$,
$$z_j \in [f_1,...,f_n|\bar G].$$
Thus $P(\bar G) \subset [f_1,...,f_n|\bar G]$. Since $ [f_1,...,f_n|\bar G]\subset A(\bar G)$ and $A(\bar G)=P(\bar G)$, we are done.

\end{proof}

Note that one simply strengthens the first two of the necessary conditions (1) - (3) on page 131 of \cite{W} to require more regularity of $f_1,...,f_k$ at the boundary. Indeed, if one is interested solely in the algebra $P(\bar G)$, for general bounded $G$, assuming $\bar G$ is polynomially convex, the proof above is valid to show $ [f_1,...,f_k|\bar G]=P(\bar G)$ (note by the Oka-Weil theorem, $f$ holomorphic in a neighborhood of $\bar G$ implies $f\in P(\bar G)$). 

\begin{corollary} \label{4.2} Let $G\subset \C^n$ be a bounded domain with the property that $\bar G$ is polynomially convex. Let $f_1,...,f_k$ ($k \geq n$) be holomorphic in a neighborhood of $\bar G$ satisfy the following three properties:
\begin{enumerate}
\item $f_1,...,f_k$ separate points in $\bar G$;
\item the matrix $\bigl[ \frac{\partial f_i}{\partial z_j}\bigr]$ has rank $n$ at each point of $\bar G$;
\item for $F:=(f_1,...,f_k)$, $K:=F(\bar G)$ is polynomially convex (as a subset of $\C^k$).
\end{enumerate}
Then the functions $f_1,...,f_k$ restricted to $\bar G$ generate $P(\bar G)$.

\end{corollary}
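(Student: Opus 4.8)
The plan is to observe that the corollary is already contained in the proof of Theorem~\ref{wermer}: neither the pseudoconvexity of $G$ nor the identity $A(\bar G)=P(\bar G)$ is used to establish the inclusion $P(\bar G)\subset[f_1,\dots,f_k|\bar G]$, and those two hypotheses enter only in the final line of that proof, to upgrade this inclusion to equality with $A(\bar G)$. Accordingly, I would simply re-run that argument and replace the last step by an appeal to the Oka--Weil theorem.

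Concretely, taking $k=n$ as in the proof of Theorem~\ref{wermer} (the extra generators when $k>n$ play no role), all the geometric constructions there depend only on conditions (1)--(3) and on the $f_j$ being holomorphic on a neighborhood of $\bar G$: one fixes $\epsilon>0$ so that $F=(f_1,\dots,f_n)$ is one-to-one with Jacobian of rank $n$ on $G_\epsilon$ (using (1), (2), and compactness of $\bar G$, with no convexity of $G$ needed); one uses polynomial convexity of $K=F(\bar G)$ from (3) to peak at points $a\in G_\epsilon\setminus\bar G$ by polynomials $g_a=Q_a(F)$; one forms the embedding $\Phi$ of $G_{3\epsilon/4}$ into a polydisk $\Delta$, shows $V:=\Phi(G_{3\epsilon/4})\cap\Delta$ is an analytic subvariety of $\Delta$, and extends the holomorphic coordinate functions $Z_j$ on $V$ to functions $H_j$ holomorphic on $\Delta$. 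The analytic-subvariety and extension steps are purely local/topological facts about $\Phi$ and hold for an arbitrary bounded, polynomially convex $\bar G$. Carrying these through exactly as before, the uniform convergence on $\Phi(\bar G)$ of the Taylor partial sums of each $H_j$ produces polynomials in $f_1,\dots,f_n,g_{a_1},\dots,g_{a_s}$ converging uniformly on $\bar G$ to $H_j(\Phi)=Z_j(\Phi)=z_j$; since each $g_{a_l}\in[f_1,\dots,f_n|\bar G]$, this yields $z_j\in[f_1,\dots,f_n|\bar G]$ for $j=1,\dots,n$, hence $P(\bar G)\subset[f_1,\dots,f_k|\bar G]$.

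For the reverse inclusion, which in Theorem~\ref{wermer} invoked $A(\bar G)=P(\bar G)$, I would instead use the Oka--Weil theorem: since $\bar G$ is polynomially convex and each $f_j$ is holomorphic on a neighborhood of $\bar G$, we have $f_j\in P(\bar G)$; as $P(\bar G)$ is a closed subalgebra of $C(\bar G)$ containing the constants, it contains the smallest such algebra generated by $f_1,\dots,f_k$, i.e.\ $[f_1,\dots,f_k|\bar G]\subset P(\bar G)$. Combining the two inclusions gives $[f_1,\dots,f_k|\bar G]=P(\bar G)$, as claimed. There is no genuine obstacle beyond bookkeeping; the only point requiring a moment's care is verifying that pseudoconvexity was inessential in the proof of Theorem~\ref{wermer} (it was listed there only to guarantee the Mergelyan property $A(\bar G)=P(\bar G)$, which is no longer being asserted) and that the passage from $V\subset\Delta$ back to $\bar G$ goes through unchanged for a general bounded polynomially convex $\bar G$.
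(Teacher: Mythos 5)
Your proposal is correct and is essentially the paper's own argument: the authors likewise observe that the proof of Theorem \ref{wermer} uses pseudoconvexity and $A(\bar G)=P(\bar G)$ only in the final step, and that for general bounded $G$ with $\bar G$ polynomially convex the same construction yields $z_j\in[f_1,\dots,f_k|\bar G]$, with the Oka--Weil theorem giving $f_j\in P(\bar G)$ and hence the reverse inclusion. Your treatment of $k>n$ (reducing to $k=n$ for simplicity) also mirrors the paper's.
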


\noindent In particular, one could apply this to polynomials $f_1,...,f_k$. 

\begin{remark} Domains $G\subset \C^n, \ n>1$ for which $A(\bar G)=P(\bar G)$ include bounded, strictly pseudoconvex domains with smooth boundary \cite{H}. For such domains, a different proof of Theorem \ref{wermer} was given in \cite{SW}. We add that if $G\subset \C^n$ is bounded and strictly pseudoconvex with $C^2$ boundary and $\bar G$ is polynomially convex, then sufficiently small $C^2$ perturbations of $\bar G$ are also polynomially convex (\cite{Stout}, p. 402). If we begin with such a domain and $f_1,...,f_k$ holomorphic in a neighborhood $U$ of $\bar G$ satisfying condition 3. of Corollary \ref{4.2} and conditions 1. and 2. on the neighborhood $U$, then for sufficiently small $C^2$ perturbations $\bar G'\subset U$ of $\bar G$ we can insure that $\bar G'$ and $K':=F(\bar G')$ are polynomially convex. Hence the same $f_1,...,f_k$ generate $P(\bar G')$. 

Product domains $G=G_1 \times \cdots \times G_n$ where $G_i\subset \C$ is bounded with $\C \setminus \bar G_i$ connected (so that $A(\bar G_i)=P(\bar G_i)$) also satisfy $A(\bar G)=P(\bar G)$, \cite{GN}.

\end{remark}

Thus, working in $\C^n$ with coordinates $(z_1,...,z_n)$, given $W=W(z_1,...,z_n)$ holomorphic on $\bar G$, it seems natural to consider the algebra 
$$[W,z_{j_1}W,...,z_{j_{n-1}}W|\bar G]$$
for $n-1$ of the coordinate functions $z_{j_1},...,z_{j_{n-1}}$. This algebra contains certain linear combinations of weighted polynomials $W^np_n(z_1,...,z_n)$. 
For example, in $\C^2$, consider the algebra generated by $W$ and $z_1W$. Given a polynomial $Q\in \C[z_1,z_2]$, say of degree $m$, so that
$$Q(z_1,z_2)=\sum_{j,k \geq 0, \ j+k \leq m} a_{jk}z_1^j z_2^k,$$
we have
$$Q(W,z_1W)= \sum_{j,k \geq 0, \ j+k \leq m} a_{jk}W^j (z_1W)^k=\sum_{j,k \geq 0, \ j+k \leq m} a_{jk} W^{j+k}z_1^k.$$
However, although $[W,z_{j_1}W,...,z_{j_{n-1}}W|\bar G]\subset A(\bar G)$, unlike the univariate case, we do not necessarily have
\begin{equation} \label{incl} A(\bar G,W)\subset [W,z_{j_1}W,...,z_{j_{n-1}}W|\bar G].\end{equation}
Indeed, if $W=W(z_{j_1},...,z_{j_{n-1}})$ and $p_n$ depends on the missing variable, clearly $W^np_n\not\in [W,z_{j_1}W,...,z_{j_{n-1}}W|\bar G]$. In $\C^2$, for example, 
$W(z_1)^n z_2^n \not \in [W,z_1W|\bar G]$ as $Q(W,z_1W)$ is independent of $z_2$. Of course these examples are quite special; and, in the case where we do have the inclusion (\ref{incl}), the proof of Proposition 2.3 of \cite{PV2} applies mutatis mutandis to show that $A(\bar G,W)$ is a closed ideal of $ [W,z_{j_1}W,...,z_{j_{n-1}}W|\bar G]$ and in this setting 
$$A(\bar G,W)\subset  [W,z_{j_1}W,...,z_{j_{n-1}}W|\bar G] \subset A(\bar G).$$
In the next section, we apply Theorem \ref{wermer} to construct explicit examples where we have $[W,z_{j_1}W,...,z_{j_{n-1}}W|\bar G] = A(\bar G)$.

\section{Examples in $\C^2$} The biggest obstacle to overcome in order to apply Theorem \ref{wermer} to conclude the equality $[W,z_{j_1}W,...,z_{j_{n-1}}W|\bar G] = A(\bar G)$ is 
3.: $K:=F(\bar G)$ is polynomially convex. In this section, we give some explicit examples where this can be verified; for simplicity, we work in $\C^2$. 

We begin with some preliminaries on polynomial convexity. Note that the function $F$ constructed in Theorem \ref{wermer} was a biholomorphic map on a neighborhood of $\bar G$. Unfortunately, the image of a polynomially convex set (such as $\bar G$ in the theorem) under a biholomorphic map need not be polynomially convex; Wermer himself constructed an example which can be found in Example 1 on pp. 135-136 of \cite{W}. For the reader's convenience, we list a few known facts/results on polynomial convexity.

\begin{enumerate}
\item  (\cite{Stout}, Theorem 1.6.24) Let $F:\C^n \to \C^n$ be a proper holomorphic map. For $X\subset \C^n$ compact, we have $\hat X=X$ if and only if $\hat {F^{-1}(X)}=F^{-1}(X)$, and $P(X)=C(X)$ if and only if $P(F^{-1}(X))=C(F^{-1}(X))$.

\item ( \cite{Stout}, Theorem 1.3.11) For $K\subset \C^n$ compact, $\hat K$ coincides with 
$$\hat K_{psh}:=\{z\in \C^n: u(z) \leq sup_K u, \ \hbox{all} \ u \ \hbox{plurisubharmonic in} \ \C^n\}.$$

\end{enumerate}

\noindent (For the Wermer example, he constructs a biholomorphic image of a polynomially convex set (an ellipsoid) in $\C^3$ under the polynomial mapping 
$$F(z_1,z_2,z_3):=(z_1,z_1 z_2 +z_3,z_1z_2^2 -z_2+2z_2 z_3).$$
Here $\det JF=0$ if $z_3=1/2$, and one sees easily that $F(0,z_2,1/2)=(0,1/2,0)$; hence $F:\C^3 \to \C^3$ is not proper.)

Working in $\C^2$, in order to obtain the equality $ [W,z_1W] =A(\bar G)$ for $G$ satisfying $\bar G$ is polynomially convex and 
$A(\bar G)=P(\bar G)$, we seek $W$ on $\bar G$ such that $F=(f_1,f_2)=(W,z_1W)$ satisfy 1.-3. of Theorem \ref{wermer}. A simple example of a holomorphic $W$ is the choice $W(z_1,z_2)=z_2$. Thus we consider $F=(f_1,f_2)=(z_2,z_1z_2)$. It is easy to check that as long as $\bar G\cap \{(z_1,0): z_1\in \C\}=\emptyset$, conditions 1. and 2. of Theorem \ref{wermer} hold. 
Note that $F(z_1,z_2)=(z_2,z_1z_2)$ is not proper as a map from $\C^2$ to $\C^2$. For example, points of the form $(z_1,0)$ with $|z_1| \to \infty$ have as image $F(z_1,0)=(0,0)$ (e.g., the pre-image of $(0,0)$ is not compact). Thus we cannot appeal to the first result above on polynomial convexity to conclude $F(\bar G)$ is polynomially convex.

\begin{example} A bidisk $G$ satisfies $\bar G$ is polynomially convex and 
$A(\bar G)=P(\bar G)$ (cf., \cite{GN} for this second point). Thus we consider
$$G:=\{(z_1,z_2): |z_1-a_1|<r_1, \ |z_2 -a_2|<r_2\} \ \hbox{with} \ r_2 < |a_2|$$
so that $\bar G\cap \{(z_1,0): z_1\in \C\}=\emptyset$. We show that $K:=F(\bar G)$ is polynomially convex. Note
$$K=\{(w_1,w_2)=(z_2,z_1z_2): |z_1-a_1|\leq r_1, \ |z_2 -a_2| \leq r_2\} $$
$$=\{(w_1,w_2): |\frac{w_2}{w_1}-a_1|\leq r_1, \ |w_1-a_2|\leq r_2\}.$$
Fix $(w_1^o,w_2^o)\not \in K$. If $|w_1^o -a_2| > r_2$, taking $Q(w_1,w_2)=w_1-a_2$ shows $(w_1^o,w_2^o)\not \in \hat K$. 
If $|w_1^o -a_2| \leq r_2$ but $|\frac{w_2^o}{w_1^o}-a_1| =(1+\delta) r_1$ with $\delta >0$, we show that by taking $\epsilon =\epsilon(K,\delta, w_2^o)>0$ 
sufficiently small, and choosing a univariate polynomial $p(w_1)$ with 
$$|p(w_1)-1/w_1| < \epsilon \ \hbox{for} \ |w_1-a_2|\leq r_2,$$
 setting $Q(w_1,w_2)=w_2p(w_1)-a_1$ shows $(w_1^o,w_2^o)\not \in \hat K$. To see this, first of all, for $(w_1,w_2)\in K$, since $|w_1|\leq |a_2|+r_2$ and $|w_2|\leq |w_1|(|a_1|+r_1)$, 
 $$|w_2|\leq |w_1|(|a_1|+r_1)\leq  (|a_2|+r_2)(|a_1|+r_1)$$
 and hence
 $$||Q||_K=||w_2p(w_1)-a_1||_K=||w_2[p(w_1)-1/w_1]+w_2/w_1-a_1||_K$$
 $$\leq ||w_2[p(w_1)-1/w_1]||_K+ ||w_2/w_1-a_1||_K\leq  (|a_2|+r_2)(|a_1|+r_1)\epsilon + r_1.$$
 On the other hand,
 $$|Q(w_1^o,w_2^o)|= |w_2^o p(w_1^o)-a_1|= |w_2^o [p(w_1^o)-1/w_1^o]+w_2^o/w_1^o-a_1|$$
 $$\geq |w_2^o/w_1^o-a_1|-  |w_2^o| |p(w_1^o)-1/w_1^o|> (1+\delta)r_1 -|w_2^o|\epsilon.$$
 Thus, given $(w_1^o,w_2^0)$ and hence $\delta$, if we take $\epsilon >0$ satisfying
 $$\epsilon < \frac{\delta r_1}{|w_2^o|+  (|a_2|+r_2)(|a_1|+r_1)},$$
 then $|Q(w_1^o,w_2^o)| > ||Q||_K$.
 
 This shows that $[W,z_1W]=[z_2, z_1 z_2]=A(\bar G)$. What can we say about $A(\bar G,W)$? Let $f(z_1)$ be holomorphic in a neighborhood of 
 the closed disk $\{z_1:|z_1-a_1|\leq r_1\}$ with $f_1 \not \equiv 0$. Recall we are assuming $|a_2| > r_2$; for simplicity, suppose $a_2>0$. If $r_2 > a_2/3$, we claim 
 that $\tilde f(z_1,z_2):= f(z_1)\not \in A(\bar G,W)$. For suppose $\tilde f(z_1,z_2):= f(z_1) \in A(\bar G,W)$ so that there exist a sequence of 
 bivariate polynomials $p_n(z_1,z_2)$ with $W^np_n= z_2^np_n(z_1,z_2)\to \tilde f(z_1,z_2)$ on $\bar G$. In particular, for each 
 fixed $z_1^o$ with $|z_1^o - a_1| \leq r_1$, 
 $$z_2^np_n(z_1^o,z_2)\to f(z_1^o) \ \hbox{on} \ |z_2-a_2|\leq r_2.$$
 But $\{z_2^np_n(z_1^o,z_2)\}$ are weighted holomorphic univariate polynomials, and by \cite{PV1}, if $r_2 > a_2/3$ we arrive at a 
 contradiction provided $f(z_1^o)\not =0$.

\end{example}

\begin{example} We suspect that the Euclidean ball
$$G:=\{(z_1,z_2): |z_1-a_1|^2 + |z_2 -a_2|^2<r_2^2\} \ \hbox{with} \ r_2 < |a_2|$$
should also work with the same mapping $F(z_1,z_2)=(z_2,z_1z_2)$. We were not able to show directly that $K:=F(\bar G)$ is polynomially convex. However, we can consider a 
slightly different domain
$$\tilde G:=\{(z_1,z_2): |z_1z_2-a_1|^2 + |z_2 -a_2|^2<r_2^2\} \ \hbox{with} \ r_2 < |a_2|.$$
A direct calculation of the complex Hessian $H(\rho)$ of the smooth plurisubharmonic defining function
$$\rho(z)= |z_1z_2-a_1|^2 + |z_2 -a_2|^2-r_2^2$$
shows that $\det H(\rho)= |z_2|^2$ so that, since $\bar {\tilde G}\cap \{(z_1,0): z_1\in \C\}=\emptyset$, $\tilde G$ is strictly 
pseudoconvex with smooth boundary and hence satisfies $A(\bar {\tilde G})=P(\bar {\tilde G})$. Moreover, since $\rho$ is continuous and plurisubharmonic on all of $\C^2$ with 
$\bar {\tilde G} =\{(z_1,z_2)\in \C^2: \rho(z_1,z_2)\leq 0\}$, $\bar {\tilde G}$ is polynomially convex (the polynomial hull coincides with the hull with respect to plurisubharmonic functions on $\C^2$). With $F(z_1,z_2)=(z_2,z_1z_2)$, the set $\tilde K :=F(\bar {\tilde G})$ becomes a Euclidean ball:
$$\tilde K=\{(w_1,w_2): |w_1-a_1|^2 + |w_1 -a_2|^2 \leq r_2^2\}.$$
Hence $[W,z_1W]=[z_2, z_1 z_2]=A(\bar {\tilde G})$.

\end{example}

\end{document}